\newtheorem{theorem}{Theorem}
\newtheorem{lemma}{Lemma}
\newtheorem{definition}{Definition}
\theoremstyle{remark}
\newtheorem{remark}{Remark}
\DeclareMathOperator*{\essup}{ess\,sup}
\newcommand{\Real}{{\mathbb{R}}}
\newcommand{\Exp}{\mathsf{E}}
\newcommand{\Pro}{\mathsf{P}}
\newcommand{\cF}{\mathscr{F}}
\newcommand{\1}{\mathbbm{1}}
\newcommand{\ind}[1]{\1_{\{#1\}}}
\newcommand{\half}{\frac{1}{2}}
\newcommand{\I}{\mathsf{I}}
\newcommand{\J}{\mathsf{J}}
\newcommand{\F}{\mathsf{F}}
\newcommand{\Q}{\mathsf{Q}}
\newcommand{\f}{\mathsf{f}}
\newcommand{\Tc}{\mathsf{T}}
\newcommand{\hth}{\hat{\theta}}
\newcommand{\T}{\EuScript{T}}
\newcommand{\U}{\EuScript{U}}
\newcommand{\E}{\EuScript{E}}
\newcommand{\R}{\EuScript{R}}
\newcommand{\N}{\EuScript{N}}
\renewcommand{\L}{\EuScript{L}}
\renewcommand{\S}{\EuScript{S}}
\renewcommand{\P}{\mathsf{P}}
\begin{document}

\title{Window-Limited CUSUM for Sequential \\ Change Detection}

\author{Liyan~Xie, 
George V.~Moustakides,
and Yao~Xie
\thanks{The work of Yao Xie is partially supported by an NSF CAREER CCF-1650913, and NSF DMS-2134037, CMMI-2015787, CMMI-2112533, DMS-1938106, and DMS-1830210. {\it (Corresponding
author: Liyan Xie.)}

Liyan Xie is with School of Data Science, The Chinese University of Hong Kong, Shenzhen 518172, China (e-mail: xieliyan@cuhk.edu.cn). 

George V. Moustakides is with the Electrical and Computer Engineering Department, University of Patras, 26500 Patras, Greece (e-mail: moustaki@upatras.gr). 

Yao Xie is with the H. Milton Stewart School of Industrial and Systems Engineering, Georgia Institute of Technology, Atlanta,
GA 30332, USA (e-mail: yao.xie@isye.gatech.edu).}
}

% The paper headers
% \markboth{IEEE Transactions on Information Theory}%
% {Shell \MakeLowercase{\textit{et al.}}: Bare Demo of IEEEtran.cls for IEEE Journals}

%\IEEEpubid{0000--0000/00\$00.00~\copyright~2021 IEEE}
% Remember, if you use this you must call \IEEEpubidadjcol in the second
% column for its text to clear the IEEEpubid mark.

\maketitle

\begin{abstract}
We study the parametric online changepoint detection problem, where the underlying distribution of the streaming data changes from a known distribution to an alternative that is of a known parametric form but with unknown parameters. We propose a joint detection/estimation scheme, which we call Window-Limited CUSUM, that combines the cumulative sum (CUSUM) test with a sliding window-based consistent estimate of the post-change parameters. We characterize the optimal choice of the window size and show that the Window-Limited CUSUM enjoys first-order asymptotic optimality as average run length approaches infinity under the optimal choice of window length. Compared to existing schemes with similar asymptotic optimality properties, our test can be much faster computed because it can recursively update the CUSUM statistic by employing the estimate of the post-change parameters. A parallel variant is also proposed that facilitates the practical implementation of the test. Numerical simulations corroborate our theoretical findings.
\end{abstract}

\begin{IEEEkeywords}
Cumulative sum (CUSUM) test, sequential change detection, average run length, asymptotic optimality
\end{IEEEkeywords}

% \begin{keyword}
% \kwd{Cumulative sum (CUSUM) test}
% \kwd{sequential change detection}
% %\kwd{average run length}
% \kwd{asymptotic optimality}
% \end{keyword}

%\author{}
%\thanks{Manuscript received .}} %; revised .}}

%
% \author{....lastname \thanks{...} \thanks{...} }

% \textcolor{red}{Shall we change the name of window-limited CUSUM to like ``moving window CUSUM'' or ``sliding window CUSUM'' as one of the Bernoulli reviewers suggested? He suggested to change the name because window-limited CUSUM is somehow an existing name which refers to the CUSUM in window-limited case, i.e., $\max_{t-w<k<t}\sum_{i=k}^t\log\frac{f_1(x_i)}{f_0(x_i)}$.}

\section{Introduction}\label{sec:intro}

Online changepoint detection is a fundamental problem in statistics and signal processing \cite{poor-hadj-QCD-book-2008,Siegmund1985,tartakovsky2014sequential,tutorial_jsait} which finds applications in a plethora of practical problems in diverse fields. The most common version of the problem consists of a sequence of observations sampled independently. There is also a changepoint such that the underlying distribution changes from one distribution to an alternative. This problem is of major importance in many applications, such as seismic signal processing \cite{xie2019asynchronous}, industrial quality control \cite{shi2009quality}, dynamical systems monitoring \cite{lai1995sequential}, structural health control \cite{balageas2010structural}, event detection \cite{li2017detecting}, anomaly detection \cite{chandola2009anomaly}, detection of attacks \cite{tartakovsky2014rapid}, etc. The goal of online changepoint detection is to detect the occurrence of the change in statistical behavior with a minimal delay while controlling the false alarm rate. The suitable tradeoff between detection delay and false alarm rate, as in all detection problems, is of essential importance for the proper mathematical formulation of the problem.

Classical formulations assume complete knowledge of the pre- and post-change underlying distributions with the cumulative sum (CUSUM) test being the most popular means for the corresponding detection \cite{page-biometrica-1954}. The CUSUM scheme properly updates the log-likelihood ratio between the post- and pre-change densities of the available data to form the corresponding test statistic. CUSUM is known to be theoretically optimum in the sense that it enjoys minimum detection delay under a fixed false alarm rate constraint \cite{mous-astat-1986}. Also, it is computationally simple because an updating formula exists for the computation of the CUSUM statistics, which requires only the current data sample (and the previous value of the CUSUM statistic).

In many real world applications, the post-change distribution is typically not precisely known since it represents a switching to an anomalous state. In this case, a Generalized Likelihood Ratio (GLR) test version of CUSUM has been developed, which applies the GLR method to select the unknown post-change distributions \cite{lai-ieeetit-1998} and form the corresponding test statistic. Unfortunately, this original GLR version turns out to be computationally demanding because it requires computations per sample, which increase linearly in time without limit. The main reason for this disadvantage is that the test statistic must be recomputed using GLR for each possible changepoint location with every new observation. A remedy also proposed in \cite{lai-ieeetit-1998} is the window-limited GLR test where the recomputation of the test statistic is limited to changepoint locations within a window of fixed length starting from the current time instant. The resulting scheme has been shown to enjoy asymptotic optimality with proper selection of the window size. Even though computations are now limited since they are of the order of the selected window length, they still tend to be considerable because for each time instant we need to recompute the GLR statistic for \textit{each} position within the window.

In this article, we develop an alternative approach for solving the problem of interest which we call Window-Limited CUSUM (\hbox{WLCUSUM}). It consists in adopting a window-based estimate of the unknown post-change parameters and, unlike the existing window-limited GLR, we use the estimate in the updating formula of the classical CUSUM statistic. This updating mechanism is far more efficient than its window-limited GLR counterpart and by proper selection of the window size we can also guarantee asymptotic optimality. In detail, we only require the sample size $w\rightarrow\infty$ as $\gamma\rightarrow\infty$ with $w=o(\log\gamma)$ while the window-limited GLR require $w=\Theta(\log\gamma)$ where $\gamma$ is the average run length requirement (see Remark \ref{rem:computation} for details). We would like to emphasize that the problem we consider is not joint detection and estimation as in \cite{chen2013optimal,moustakides2012joint}, where the two tasks are regarded as equally important. Here, we are primarily interested in detection, with estimation being an auxiliary action that contributes towards our detection goal. For this reason we only require the estimator to be consistent without insisting on any explicit form.

Compared with existing CUSUM-like procedures employing estimates of post-change parameters, the proposed WLCUSUM method applies to a far wider range of parametric distributions and not only the exponential family which is mostly the case with the available approaches. Our main contributions in this work include: (i)~Proof of asymptotic optimality of the proposed WLCUSUM procedure under Lorden's {\it worst-case} detection delay \cite{Lorden1971}. To achieve this goal, we had to develop new upper bounds for overshoots over a constant threshold for sums of data that are $w$-dependent. (ii)~Characterization of the {\it optimal} choice of the window size to guide practical implementations and offer the best possible performance for the proposed scheme. (iii)~Development of an alternative parallel version of WLCUSUM capable of matching the performance of the optimal window without the need to explicitly specify it. 

We must also mention that one of the main characteristics of WLCUSUM is its computational efficiency. The benchmark window-limited GLR \cite{lai-ieeetit-1998} requires a window size that is at least as large as the detection delay, while in our detector the optimal window has a size that is significantly smaller. This difference in window size translates into an overall computational complexity which, in our scheme is at least an order of magnitude smaller than the corresponding complexity of the window-limited GLR.

%This paper proposes a computationally efficient recursive changepoint detection rule that can be used when the post-change distribution is unknown. This is a very general formulation since we have only moderate restrictions on the post-change distributions. Moreover, we show the asymptotic optimality of the proposed detection procedure, by proving that the expected detection delay matches the delay of the optimal CUSUM procedure asymptotically in first-order.   
%We focus on this scenario and provide a computationally efficient detection procedure with the theoretical optimality.
 
The remainder of this paper is organized as follows. Section\,\ref{sec:related} reviews related work. Section\,\ref{sec:setup} introduces the adopted formulation and presents details of the proposed detection procedure. Section\,\ref{sec:theory} contains the theoretical analysis establishing the asymptotic optimality of the proposed procedure and the form of the optimal window size. Section\,\ref{sec:variants} presents a parallel implementation of the proposed WLCUSUM with varying window sizes which is particularly suited for practical implementation. Finally, Section\,\ref{sec:numerical} presents examples that demonstrate the performance of the WLCUSUM procedure with comparisons to the corresponding GLR scheme. For better readability of the main results, most of our technical part is moved to the Appendix.

\section{Related Work}\label{sec:related}

The study of online changepoint detection can be traced back to the early work of Page \cite{page-biometrica-1954} and has been studied for several decades. Most articles consider the problem under independent observations but there are also extensions to more complicated data models; see \cite{detectAbruptChange93,poor-hadj-QCD-book-2008,Siegmund1985,tartakovsky2014sequential,tartakovsky2019sequential,veeravalli2013quickest} for thorough reviews in this field. 

We distinguish two main tracks in sequential change detection: The first is the Bayesian approach, where a prior for the time of change is assumed to be available. The second is the minimax (non-Bayesian) formulation, where the changepoint is considered to be deterministic but unknown. Interestingly, both approaches can be put under the same mathematical framework \cite{Moustakides2008,tartakovsky2010state} and, depending on the data model, we can decide which formulation is most suitable to be adopted. 

The first exact optimality result in sequential change detection can be found in \cite{Shiryaev1963} where the focus is on detecting a change in the drift of a Brownian motion. Following a Bayesian approach, the change time is modeled as an independent random variable that is exponentially distributed. For non-Bayesian approaches, the CUSUM test is perhaps the most popular change detection algorithm for the classical setup of detecting a change from a known nominal to a known alternative density. CUSUM, also known as the Page test \cite{page-biometrica-1954}, was first shown to be asymptotically optimum in \cite{Lorden1971} when observations are i.i.d.~before and after the change. The exact optimality of the CUSUM test under the same data model was established in \cite{mous-astat-1986}. An alternative detection procedure introduced in \cite{poll-astat-1985}, even though it was developed by adopting a minimax approach, presents very strong similarities to the optimum Bayesian test developed in \cite{Shiryaev1963}. This sequential detector, known as the Shiryaev-Roberts-Pollak (SRP) test, enjoys a very strong asymptotic optimality property which, unfortunately, was proven not to be exact \cite{polunchenko2010optimality}. We must also mention that in the classical version of the problem, which we discussed so far, the computation of the corresponding test statistic of the CUSUM and the SRP test is straightforward and can be implemented very efficiently.

When we consider parametric density families where the parameters are unknown, CUSUM and SRP are used as prototypes to develop variants which, at best, can enjoy asymptotic optimality.
In particular, when the pre-change density is known while the post-change contains unknown parameters, there are two classes of tests in the literature that address this problem: (i)~The generalized likelihood ratio (GLR) approach \cite{lai-ieeetit-1998}, where the detection statistic, at each time, is computed by substituting the unknown parameter with its maximum likelihood estimate using all potential post-change data until the current time; (ii)~The mixture likelihood ratio procedure \cite[Pages 418-423]{tartakovsky2014sequential}, where the detection statistic is a weighted average of the corresponding log-likelihood ratio by assuming a weight (prior distribution) on the post-change parameters. Although the corresponding tests, as mentioned, enjoy asymptotic optimality properties, a major drawback is that their computational complexity can be high because their detection statistic cannot be computed efficiently. In addition, the score statistics were also used to avoid estimation of the unknown parameters \cite{kirch2015use}.

For the popular CUSUM test, many variants of the traditional version were proposed to improve the computational efficiency when the post-change density contains unknown parameters. To detect the change over a wide range of mean shifts in quality control, the combined usage of CUSUM and Shewhart charts was employed in \cite{lucas1982combined,westgard1977combined}, and the simultaneous use of multiple CUSUM procedures with different drift values was suggested in \cite{sparks2000cusum,zhao2005dual,yu2020note,romano2021fast}. Moreover, the case with finitely many post-change distributions was considered and the joint detection/isolation algorithm was proposed based on multiple hypotheses sequential probability ratio test \cite{nikiforov1995generalized,lai2000sequential}. A different method known as Adaptive CUSUM was first proposed in \cite{sparks2000cusum} and studied further in \cite{abbasi2019optimal,abbasi2020new,jiang2008adaptive,luo2009adaptive,shu2006markov,wu2009enhanced}. The Adaptive CUSUM continuously adjusts its statistic in order to efficiently signal a one-step-ahead forecast in deviation from its target value. For example, such a procedure has been considered in \cite{cao2018entropy}, where the estimate is based on online algorithms such as stochastic gradient descent, and the performance metrics are related to the regret bounds of the online estimators. The simple exponentially weighted moving average (EWMA) estimate is the most common selection for the one-step-ahead forecast. Optimality properties of the Adaptive CUSUM were considered in \cite{lorden2008sequential} where the first-order asymptotic optimality was established for the univariate exponential family while extensions appear in \cite{wu2017detecting}. Finally, a multi-stream Adaptive CUSUM test was proposed in \cite{xu2021multi} establishing asymptotic optimality for the case of Gaussian distributions.

\section{Preliminaries}\label{sec:setup}

\subsection{Problem Setup}

%\textcolor{red}{TO DO: further emphasize that $f_\infty$ and $f_0$ does not belong to the same parametric family (or the same but $\Theta$ is strictly restricted.}

Suppose we have access to the multivariate data sequence $\{\xi_t\}$ with $\xi_t \in \Real^k$, which is sampled sequentially. We assume that there are two probability density functions (pdf) $\f_\infty(\cdot),\f_0(\cdot)$ and a deterministic time $\tau\in\{0,1,2,\ldots\}$ such that 
\begin{equation}
\xi_t  \stackrel{\rm i.i.d.}{\sim}\left\{\begin{array}{ll}
\f_\infty(\xi),&t = 1,2,\ldots,\tau,\\[2pt]
\f_0(\xi,\theta),&t = \tau+1,\tau+2,\ldots
\end{array}\right. 
\label{eq:data_model}
\end{equation}
In other words $\tau$ is a changepoint, where the observations are i.i.d.~before and including $\tau$ following the pdf $\f_\infty(\cdot)$, while for times after $\tau$ they are again i.i.d.~following $\f_0(\cdot,\theta)$, which is characterized by an unknown parameter vector $\theta\in\Theta\subseteq\Real^K$ where $\Theta$ is a known subset of $\Real^K$. If there is no constraint on $\theta$ then we simply set $\Theta=\Real^K$. Throughout this paper we assume that the pre-change distribution $f_\infty(\cdot)$ does {\it not} belong to the post-change distributions with parameter set $\Theta$.

We denote with $\Pro_\infty,\Exp_\infty$ the probability measure and the corresponding expectation when all samples follow the pre-change distribution (i.e.,~the change happens at $\infty$), $\Pro_0^\theta,\Exp_0^\theta$ when all data are under the post-change density with parameter $\theta$ (the change happens at 0) and finally with $\Pro_\tau^\theta,\Exp_\tau^\theta$ the measure and expectation induced when the change happens at time $\tau$. We also denote with $\xi_{t_1}^{t_2}$ ($t_2\geq t_1$) the collection of data $\{\xi_{t_1},\ldots,\xi_{t_2}\}$.

We assume that $\f_\infty(\cdot)$ is {\it known} since usually it can be estimated from historical data by density estimation \cite{silverman1986density} methods. We can also assume that $\f_\infty(\cdot)$ is partially known by extending our results to incorporate the estimation error for $\f_\infty(\cdot)$. However, we note that this estimation error can be negligible as long as the volume of historical data is sufficiently large. For the post-change density, we assume that $\f_0(\cdot,\theta)$ has a known form, but the parameter vector $\theta$ is unknown, without any prior distribution that can capture its statistical behavior. 
Our goal is to detect the changepoint $\tau$ as quickly as possible from streaming data when it occurs, under the constraint that the false alarm rate is properly controlled.

A sequential change detection test simply consists of a {\it stopping time} $T$ which denotes the time we stop and declare that a change took place before time $T$. The stopping time is adapted to the filtration $\{\cF_t\}$, $\cF_t=\sigma\{\xi_1,\ldots,\xi_t\}$ with $\cF_0$ denoting the trivial sigma-algebra. This assures that only available data are employed when we decide whether to stop or not at each time $t$.
Our intention is to use the classical CUSUM test for the change detection problem. Since CUSUM requires exact knowledge of the pre- and post-change densities, we will replace the unknown parameter vector $\theta$ with a proper estimate over available data. This estimate will be renewed with every new data sample. Before presenting the details of our scheme, let us first recall the CUSUM test and its corresponding optimality properties. %, since the latter will play a major role in our subsequent analysis.

\subsection{The CUSUM Test}

Fix the post-change parameter vector $\theta$ and suppose it is \textit{known}. This suggests that we are under the classical formulation, where we would like to detect a change from a known density $\f_\infty(\cdot)$ to an alternative known density $\f_0(\cdot,\theta)$. This problem can be solved optimally with the CUSUM test. The CUSUM statistic $\{\mathsf{S}_t\}$ is defined as 
\begin{equation*}
\mathsf{S}_t = \max_{0\leq k<t }\sum_{j=k+1}^t \log\frac{\f_0(\xi_j,\theta)}{\f_\infty(\xi_j)},
\end{equation*}
which is essentially the (maximum) likelihood ratio statistics as detailed in \cite[Section 8.2.3]{tartakovsky2014sequential}. The above CUSUM statistic satisfies and is usually implemented through the following update
\begin{equation}
\mathsf{S}_t=\mathsf{S}_{t-1}^+ + \log\frac{\f_0(\xi_t,\theta)}{\f_\infty(\xi_t)},% \ell(\xi_t,\theta),
~\mathsf{S}_0=0,
\label{eq:statCUSUM}
\end{equation}
where $x^+=\max\{x,0\}$. 
The update in \eqref{eq:statCUSUM} is applied every time a new sample becomes available. The corresponding CUSUM stopping time that signals the change is then defined as
\begin{equation}
{\Tc}=\inf\{t>0: \mathsf{S}_t\geq\nu\},
\label{eq:stCUSUM}
\end{equation}
where $\nu>0$ is a constant threshold, the choice of which needs to balance the false alarm rate and the detection delay. 
The first time $\mathsf{S}_t$ hits or exceeds the threshold $\nu$, we stop and declare that a change took place before $t$. CUSUM is known to solve \textit{exactly} \cite{mous-astat-1986} the following challenging constrained optimization problem
\begin{equation}
\begin{aligned} &\inf_{T}\sup_{\tau\geq0}\,\essup\Exp_{\tau}^\theta[T-\tau|\cF_\tau,T>\tau],\\&\text{subject~to:}~\Exp_\infty[T]\geq\gamma>1,
\end{aligned}
\label{eq:optCUSUM}
\end{equation}
when the threshold $\nu$ is selected to satisfy the false alarm constraint with equality, namely $\Exp_\infty[{\Tc}]=\gamma$. In other words, among all stopping times that have an average false alarm period (also known as {\it average run length}, ARL) no smaller than $\gamma$, the CUSUM stopping time ${\Tc}$ has the smallest \textit{worst-case average detection delay} (WADD). From \eqref{eq:optCUSUM} we observe that for each possible changepoint $\tau$ we consider the average detection delay conditioned on the \textit{worst possible data} before (and including) time $\tau$; this is a particular type of delay measure proposed by Lorden \cite{Lorden1971}. It is well-known that CUSUM regarding this worst-case performance is an equalizer in the sense that $\essup\Exp_{\tau}^\theta[\Tc-\tau|\cF_\tau,\Tc>\tau]$ is the same for all changepoints $\tau$; therefore, for the computation of the worst-case detection delay scenario, we can simply limit ourselves to $\tau=0$ (i.e., $\Exp_0^\theta[\Tc]$). 

The analysis in this article will be {\it asymptotic} (for large $\gamma$). For this reason, with the following lemma, we provide a convenient asymptotic formula for the CUSUM performance.
\begin{lemma}[Performance of exact CUSUM]\label{lem:1}
For threshold $\nu=\log\gamma$ the CUSUM test satisfies
\begin{equation}
\Exp_\infty[\Tc]\geq \gamma,~~~~\Exp_0^\theta[\Tc]=\frac{\log\gamma}{\I_0}\left(1+\varTheta\left(\frac{1}{\log\gamma}\right)\right),
\label{eq:perfCUSUM}
\end{equation}
where $\I_0=\Exp_0^\theta\left[\log\frac{\f_0(\xi_1,\theta)}{\f_\infty(\xi_1)}\right]$ is the Kullback-Leibler information number (divergence) of the post- and pre-change densities. 
\end{lemma}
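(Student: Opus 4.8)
The plan is to reduce everything to the one–dimensional random walk $Z_t=\sum_{j=1}^t\log\frac{\f_0(\xi_j,\theta)}{\f_\infty(\xi_j)}$ of log-likelihood increments, since the recursion \eqref{eq:statCUSUM} gives the closed form $\mathsf{S}_t=Z_t-\min_{0\le k\le t-1}Z_k$ (with $Z_0=0$). Because CUSUM is an equalizer rule, the worst-case delay in \eqref{eq:optCUSUM} equals $\Exp_0^\theta[\Tc]$, so only the change-at-$0$ regime needs analysis. I would then treat the two assertions separately: the false-alarm bound $\Exp_\infty[\Tc]\ge\gamma$ and the delay expansion $\Exp_0^\theta[\Tc]=\frac{\log\gamma}{\I_0}(1+\varTheta(1/\log\gamma))$.

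For the ARL bound I would compare CUSUM with the Shiryaev--Roberts statistic $R_t=\sum_{k=0}^{t-1}\prod_{j=k+1}^t L_j$, where $L_j=\f_0(\xi_j,\theta)/\f_\infty(\xi_j)$. Since $e^{\mathsf{S}_t}=\max_{0\le k<t}\prod_{j=k+1}^t L_j\le R_t$, the CUSUM stopping time is no smaller than the SR stopping time $T_{\mathrm{SR}}=\inf\{t:R_t\ge e^{\nu}\}$, so $\Exp_\infty[\Tc]\ge\Exp_\infty[T_{\mathrm{SR}}]$. The recursion $R_t=(1+R_{t-1})L_t$ together with $\Exp_\infty[L_t]=1$ shows $R_t-t$ is a mean-zero $\Pro_\infty$-martingale; hence $\Exp_\infty[R_{t\wedge T_{\mathrm{SR}}}]=\Exp_\infty[t\wedge T_{\mathrm{SR}}]\le\Exp_\infty[T_{\mathrm{SR}}]$, and letting $t\to\infty$ with Fatou gives $e^{\nu}\le\Exp_\infty[R_{T_{\mathrm{SR}}}]\le\Exp_\infty[T_{\mathrm{SR}}]$. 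With $\nu=\log\gamma$ this yields $\Exp_\infty[\Tc]\ge\gamma$.

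For the delay I would sandwich $\Tc$ between two first-passage times of $Z$. Because $Z_0=0$ is among the minimized indices, $\mathsf{S}_t\ge Z_t$, so $\Tc\le\eta:=\inf\{t:Z_t\ge\nu\}$, giving the upper bound $\Exp_0^\theta[\Tc]\le\Exp_0^\theta[\eta]$. Conversely $\min_{0\le k\le t-1}Z_k\ge\inf_{k\ge0}Z_k=:-m$ with $m\ge0$ a.s.\ finite under the positive drift $\I_0>0$, so $\mathsf{S}_t\le Z_t+m$ and therefore $Z_{\Tc}\ge\nu-m$. Applying Wald's identity $\Exp_0^\theta[Z_\eta]=\I_0\Exp_0^\theta[\eta]$ and $\Exp_0^\theta[Z_{\Tc}]=\I_0\Exp_0^\theta[\Tc]$ (both finite since $\Tc\le\eta$ and positive drift makes $\Exp_0^\theta[\eta]<\infty$), and writing $Z_\eta=\nu+\rho_\nu$ with overshoot $\rho_\nu\ge0$, I obtain
\[
\frac{\nu-\Exp_0^\theta[m]}{\I_0}\le\Exp_0^\theta[\Tc]\le\frac{\nu+\Exp_0^\theta[\rho_\nu]}{\I_0}.
\]
Substituting $\nu=\log\gamma$ turns an additive $O(1)$ correction into the multiplicative factor $1+\varTheta(1/\log\gamma)$, provided the two correction terms are controlled.

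The hard part is exactly this uniform control of the overshoot $\rho_\nu$ and of the running-minimum term $m$: the crude sandwich only shows the correction is $O(1)$ additively, whereas the claimed two-sided $\varTheta(1/\log\gamma)$ requires pinning it to a genuine constant. This is where renewal theory enters. Under the paper's standing integrability assumptions on $\log L_1$ (non-arithmetic increments with a finite second moment), $\Exp_0^\theta[\rho_\nu]$ is bounded uniformly in $\nu$ and converges to the strictly positive stationary mean overshoot, while $\Exp_0^\theta[m]$ is finite; establishing these bounds uniformly in the threshold—rather than for a single fixed $\nu$—is the delicate step that upgrades the relative error to the stated $\varTheta(1/\log\gamma)$.
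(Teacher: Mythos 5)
Your argument is sound and, unlike the paper, it is an actual proof: the paper's ``proof'' of Lemma~\ref{lem:1} consists of deferring entirely to \cite[Lemma 1]{xu2021optimum}, so there is no in-paper derivation to match. What you do is the standard route, and it is worth noting that both halves of your sketch reappear elsewhere in the paper as the authors' own machinery: your ARL bound (dominate $e^{\mathsf{S}_t}$ by the Shiryaev--Roberts statistic $R_t$, use that $R_t-t$ is a $\Pro_\infty$-martingale, apply optional sampling) is exactly the device used in Lemma~\ref{lem:2} for WLCUSUM, and your delay bound (Wald's identity plus control of the overshoot via Lorden-type excess-over-the-boundary estimates) is the skeleton of the Appendix proof of Theorem~\ref{th:1} in the $w$-dependent setting. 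Your sandwich $\frac{\nu-\Exp_0^\theta[m]}{\I_0}\le\Exp_0^\theta[\Tc]\le\frac{\nu+\Exp_0^\theta[\rho_\nu]}{\I_0}$ is correct (the identity $\mathsf{S}_t=Z_t-\min_{0\le k\le t-1}Z_k$, the comparison $\Tc\le\eta$, and the applicability of Wald all check out, with $\Exp_0^\theta[\rho_\nu]$ uniformly bounded by Lorden's inequality under $\J_0<\infty$ and $\Exp_0^\theta[m]<\infty$ under the same moment condition), and it yields $\Exp_0^\theta[\Tc]=\frac{\log\gamma}{\I_0}\bigl(1+O(1/\log\gamma)\bigr)$. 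The one thing it does not deliver --- as you candidly flag --- is the two-sided $\varTheta(1/\log\gamma)$, which requires showing the additive correction converges to a \emph{nonzero} constant (nonlinear renewal theory under a non-arithmetic assumption the paper never states); that refinement is precisely what the citation to \cite{xu2021optimum} is carrying, and for the downstream use in Theorem~\ref{lem:last} only the one-sided $O(1/\log\gamma)$ you prove is actually needed.
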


\begin{proof}
The performance of CUSUM, when expressed in asymptotic terms, is usually given in the form of $\Exp_0^\theta[\Tc]=\frac{\nu}{\I_0}\big(1+o(1)\big)$ (see \cite{Siegmund1985}). However, here we would like to be more explicit regarding the $o(1)$ term in order to be able to compare the case of known versus the case of estimated $\theta$. The proof of this formula can be found in \cite[Lemma 1]{xu2021optimum}.
\end{proof}

From Lemma\,\ref{lem:1} we conclude that if we know $\theta$ and apply the CUSUM test defined in \eqref{eq:statCUSUM}, \eqref{eq:stCUSUM} with threshold $\nu=\log\gamma$ then the corresponding CUSUM stopping time $\Tc$ enjoys an asymptotic performance captured by \eqref{eq:perfCUSUM}. In fact no other stopping time $T$ that satisfies the same false alarm constraint can have a limiting value (liminf) for the ratio $\Exp_0^\theta[T]/\frac{\log\gamma}{\I_0}$ that is smaller than 1 as $\gamma\to\infty$. This statement describes the optimality of CUSUM in first-order asymptotic terms as $\gamma\to\infty$.

\section{Proposed Method: Window-Limited CUSUM Test}

In a realistic case $\theta$ is unknown and, as we mentioned, we may know instead a set $\Theta$ of possible values for $\theta$. Of course $\Theta=\Real^K$ if there is no restriction on $\theta$. If $\theta$ is not exactly known then the CUSUM test cannot be applied in the form of \eqref{eq:statCUSUM}, \eqref{eq:stCUSUM}. For this reason, as in the literature, we propose to replace the unknown $\theta$ with a \textit{consistent estimate}. %Let us make a precise presentation of our scheme because every detail plays an important role in the analysis that follows. 
Specifically we select a window of length $w\geq1$ and define the Window-Limited CUSUM (WLCUSUM) test statistic $\{\S_t\}$ for $t>w$ similarly to \eqref{eq:statCUSUM}:
\begin{equation}
\S_t=\S_{t-1}^+ + \log\frac{\f_0(\xi_t,\hth_{t-1})}{\f_\infty(\xi_t)},
\S_w=0, \ t = w+1, w+2, \ldots,
\label{eq:statACUSUM}
\end{equation}
where $\hth_t\in\Theta$ is an estimate of $\theta$. We are not going to adopt any specific estimator, we only constrain the estimate $\hth_t$ to be based on the data $\{\xi_{t},\ldots,\xi_{t-w+1}\}$ and to be consistent.
An obvious possibility would be the Maximum Likelihood Estimator (MLE)
\begin{equation}
\hth_t=\text{arg}\max_{\theta\in\Theta}\sum_{i=0}^{w-1}\log\f_0(\xi_{t-i},\theta),
\label{eq:mlACUSUM}
\end{equation}
which, as we will see in our analysis, enjoys certain desirable optimality characteristics when employed in our proposed detection scheme.
For the stopping time, similarly to CUSUM, we define
\begin{equation}
\T=\inf\{t>w: \S_t\geq\nu\},
\label{eq:stACUSUM}
\end{equation}
with $\nu>0$ is a constant threshold. 				

%\begin{remark}
%We have implicitly assumed that the parameter vector $\theta$ can take arbitrary values in $\Real^K$ without any additional restrictions. However, in practice, we may need to impose suitable constraints on $\theta$ to ensure that the pre- and post-change densities cannot become identical. In such cases, our estimates need to be projected onto the proper sets in order to satisfy the constraints. For example, when detecting normal mean-shift from $\N(0,1)$ to $\N(\theta,1)$, we assume a barrier value $\vartheta$ such that the post-change mean satisfies $|\theta|\geq \vartheta>0$. In the case of MLE this will restrict its solution within the corresponding feasible set in place of the classical arithmetic average of the samples. This is detailed in our numerical results in Section\,\ref{sec:numerical}.
%\end{remark}
It is worth noting that the increment term $\{\log\f_0(\xi_t,\hth_{t-1})/\f_\infty(\xi_t)\}_{t\in\mathbf{Z}}$ in \eqref{eq:statACUSUM} is $w$-dependent (defined below); while the increment terms (log-likelihood ratios) in the exact CUSUM \eqref{eq:statCUSUM} are independent. Due to such $w$-dependency induced by the estimates which employ data from the past, obtaining the formulas for the detection performance is not a straightforward task.
\begin{definition}[$w$-dependence, \cite{janson1984runs}] A discrete-time stochastic process $(Y_n)_{n\in\mathbf{Z}}$ is $w$-dependent if for all $k$, the joint stochastic variables $(Y_n)_{n\leq k}$ are independent of the joint stochastic variables $(Y_n)_{n\geq k+w+1}$. 
\end{definition}

We must point out that several existing detection/estimation methods propose to recalculate the estimate $\hth_t$ at each time $t$ and perform a dual maximization. Such is, for example, the popular GLR approach proposed in \cite{lai-ieeetit-1998}
$$
  \tilde{\S}_t=\max_{0\leq\tau<t} \sup_{\theta\in\Theta}\sum_{s=\tau+1}^t \log\frac{\f_0(\xi_s,\theta)}{\f_\infty(\xi_s)}. 
$$
Unfortunately, the above statistic does not possess any convenient updating formula similar to \eqref{eq:statCUSUM} and requires a number of operations per sample that increases linearly with time.
To remedy this serious computational handicap, a {\it window-limited} version is commonly adopted where the search for the maximum over $\tau$ is performed within a window of fixed length $w$. Specifically, the following maximization replaces the previous one
\begin{equation}\label{glr_lai}
  \tilde{\S}_t=\max_{t-w\leq\tau<t} \sup_{\theta\in\Theta}\sum_{s=\tau+1}^t \log\frac{\f_0(\xi_s,\theta)}{\f_\infty(\xi_s)}. 
\end{equation}
This clearly reduces the complexity to a fixed number of operations per time update but, as we discuss in Remark\,\ref{rem:computation}, it can still be quite demanding. In the following, we refer to \eqref{glr_lai} as the window-limited GLR approach.

Unlike the window-limited GLR, we propose to employ, as in \cite{xie2020seq_ana}, the classical CUSUM update in \eqref{eq:statCUSUM} where we simply replace the unknown parameter $\theta$ with a consistent estimate, thus preserving the computational efficiency of the original CUSUM. 
The reason we expect this idea to be successful is that when the data are under the post-change regime, $\hth_{t-1}$ will be close to the true $\theta$ if $w$ is sufficiently large, and the WLCUSUM statistic will exhibit a positive drift not differing significantly from the exact CUSUM drift. On the other hand, when the data follow the pre-change regime, we will show that the estimate will impose a negative drift on the WLCUSUM statistic forcing the test to perform repeated restarts exactly similarly to the case of the exact CUSUM. These claims will be demonstrated through a rigorous analysis. Additionally, we will obtain asymptotic formulas for the average false alarm period and the worst-case average detection delay, which will allow us to establish the asymptotic optimality of our proposed detection scheme. 
Before starting our mathematical derivations, let us make some remarks and present our assumptions.

\begin{remark}\label{rem:1}
As can be seen from \eqref{eq:statACUSUM}, we do not compute any test statistic during the first $w$ samples since we accumulate these samples in order to obtain the first estimate $\hth_w$. The test statistic is first computed at time $w+1$ where we also employ the first estimate $\hth_w$. Since we necessarily wait for $w$ time instances, it is understood that whatever average delay we compute, it cannot be smaller than $w$. Asymptotically, this fact is not disturbing because we will assure in Section\,\ref{sec:theory} that this initial waiting time $w$ is {\it negligible} compared to the actual detection delay required to detect the change. Of course, not applying the test during the first $w$ time instances is only for {\it analytical} purposes, as this corresponds to the worst-case average detection delay (as we prove in Lemma\,\ref{lem:3}). In a practical implementation, we can start testing earlier, and our estimator at every time $t$ can rely on the existing data without necessarily waiting until $w$ samples become available. We discuss this point in more detail in Section\,\ref{sec:variants} when we introduce a computationally convenient variant of our scheme.
%In our experiments, we will present such simulations for comparison, but we will not analyze the employment of adaptive estimators. The reason is that these estimators are not characterized by the $w$-dependency property, which is enjoyed by the windowed estimators, making the whole analysis extremely challenging; and the worst-case average detection delay for ACUSUM using adaptive estimates actually can be infinity.
\end{remark}

\begin{remark}\label{rem:2}
The estimate $\hth_t$ that we employ in our test, as mentioned, is obtained by processing the samples $\{\xi_t,\ldots,\xi_{t-w+1}\}$. This assures that $\xi_t$ and $\hth_{t-1}$ are {\it independent} and the same is true between $\hth_t$ and $\cF_{t-w}$ ($w$-dependency). As we are going to see, these facts play an important role in our proofs. 
%This second independence is lost in the case of adaptive estimation algorithms because $\hth_t$ is related to all data in $\cF_t$.
\end{remark}

\begin{remark}\label{rem:name}
We note that the name ``window-limited CUSUM'' has been used in the literature, e.g., \cite{jacob2008sequential} and \cite{lai-ieeetit-1998}. But they are all very different from our proposed scheme. In detail, Lai's definition of window-limited CUSUM statistics is $S_t= \max_{t-w < k < t} \sum_{i=k+1}^t \log\frac{\f_0(\xi_i,\theta)}{\f_\infty(\xi_i)}$, thus it also assumed full knowledge of the pre- and post-change distributions. Therefore, we must differentiate our algorithm from those in the literature that usually refers to the above statistics and may coincides with window-limited GLR in some senses, and is non-recursive, while our proposed WL-CUSUM enjoys recursive update and the estimator $\hth$ is also differently constructed.
\end{remark}

\subsection{Assumptions and Useful Observations}

Regarding the estimates $\{\hth_t\}$ since it is not our intention to promote any specific estimation method, we will impose general characteristics that are enjoyed by most reasonable estimators (such as MLE). In particular, we make the following key assumptions for the estimator.
\begin{itemize}
\item[A1:] Under the post-change regime $\Pro_0^\theta$, we assume that $\Exp_0^\theta[\hth_t]=\theta$ (unbiased estimator\footnote{\label{footnote1}In fact our analysis can also accommodate asymptotically unbiased estimators provided that the norm square of the bias is $o(1/w)$. It is for simplicity that we limit ourselves to the unbiased case.}).
If we write $\hth_t=\theta+\E_t$, the zero mean estimation error has a covariance matrix of the form $\Exp_0^\theta[\E_t\E_t^\intercal]=\frac{1}{w}\Sigma_0\big(1+o(1)\big)$. Matrix $\Sigma_0$ is of the order of a constant when considered as a function of $w$. %\liyan{In general we can consider estimator that is asymptotically unbiased (e.g., MLE in certain cases), and the bias, of the order of $O(1/w)$, can be absorbed into $\E_t$.}

\item[A2:] Under the pre-change regime $\Pro_\infty$, we assume that $\Exp_\infty[\hth_t]=\theta_\infty$. If we write $\hth_t=\theta_\infty +\E_t$, the zero mean estimation error has a covariance matrix of the form $\Exp_\infty[\E_t\E_t^\intercal]=\frac{1}{w}\Sigma_\infty(1+o(1)\big)$. Matrix $\Sigma_\infty$  is of the order of a constant as a function of $w$.
\end{itemize}
With A1, we require our estimator, when applied to post-change data to provide reliable estimates of the correct parameter vector $\theta$. We expect the quality of our estimate to improve with increasing window size $w$, since the error covariance matrix is inversely proportional to $w$. When applied to pre-change data, the estimator behavior is described by A2. We assume that it provides estimates close to some value $\theta_\infty$, where $\theta_\infty$ is estimator dependent. For example, in the case of the MLE, we have that
$$
\theta_\infty=\mathrm{arg}\max_\theta\Exp_\infty[\log\f_0(\xi_1,\theta)],
$$
which is the limiting form of \eqref{eq:mlACUSUM} after we normalize with the window size $w$ and invoke the Law-of-Large Numbers. We note that the MLE satisfies A1 and A2 since MLE is asymptotically unbiased (satisfying the footnote \ref{footnote1}) under mild conditions \cite{lehmann2006theory}. Let us now continue with our assumptions. The next assumption refers to the Kullback-Leibler (KL) information numbers and the second moment of the log-likelihood ratio.
\begin{itemize}
\item[A3:] Consider the two KL information numbers and the second moment of the log-likelihood ratio under the $\Pro_0^\theta$ measure
\begin{align*}  \I_0&=\Exp_0^\theta\left[\log\frac{\f_0(\xi_1,\theta)}{\f_\infty(\xi_1)}\right],\\
\I_\infty&=-\Exp_\infty\left[\log\frac{\f_0(\xi_1,\theta_\infty)}{\f_\infty(\xi_1)}\right],\\
\J_0&=\Exp_0^\theta\left[\left(\log\frac{\f_0(\xi_1,\theta)}{\f_\infty(\xi_1)}\right)^2\right].  
    \end{align*} 
We assume that all three quantities are strictly {\it positive} and {\it bounded} for every $\theta\in\Theta$ of interest.
\end{itemize}
We see that $\I_\infty$ involves the parameter value $\theta_\infty$, which is estimated when the data are under the pre-change regime. This information number can be strictly positive if, for example, the pre-change density $\f_\infty(\cdot)$ cannot be expressed as the post-change density $\f_0(\cdot,\theta_\infty)$ for some particular value $\theta_\infty\in\Theta$ that belongs to the allowable set of post-change parameter values. Unfortunately, the analysis that follows does not cover the case where $\f_\infty(\cdot)=\f_0(\cdot,\theta_\infty)$ and it requires significant technical modifications to address this particular possibility. On the other hand we can avoid the occurrence of this case by defining a suitable set $\Theta$ that does not contain $\theta_\infty$. 

In our derivations, we will encounter quantities that resemble the KL information numbers and the second moment of the log-likelihood ratio introduced in Assumption\,A3, but with the parameter $\theta$ replaced by an estimate. In particular, we will be interested in the following alternatives:
\begin{align*}
\hat{\I}_0&=\Exp_0^\theta\left[\log\frac{\f_0(\xi_t,\hth_{t-1})}{\f_\infty(\xi_t)}\right],\\
\hat{\I}_\infty&=-\Exp_\infty\left[\log\frac{\f_0(\xi_t,\hth_{t-1})}{\f_\infty(\xi_t)}\right],\\
\hat{\!\J}_0&=\Exp_0^\theta\left[\left(\log\frac{\f_0(\xi_t,\hth_{t-1})}{\f_\infty(\xi_t)}\right)^2\right]. 
\end{align*}
Due to Assumptions\,A1 and A2 we expect $\hat{\I}_0$, $\hat{\I}_\infty$, and $\hat{\!\J}_0$ to be close to their exact counterparts $\I_0,\I_\infty,\J_0$. 
In fact we can specify their relationship more precisely by applying a Taylor expansion on $\log\big(\f_0(\xi_t,\hth_{t-1})/\f_\infty(\xi_t)\big)$ around the mean of $\hth_{t-1}$ and retaining the first three terms. Additionally, we could make suitable assumptions on the smoothness of $\f_0(\xi,\theta)$ to guarantee the effectiveness of these approximations. In order to avoid these common technicalities, we propose to simply assume that such an expansion is valid without more details. This will allow us to focus on the more interesting question of the WLCUSUM asymptotic optimality, which will require a number of novel results due to the $w$-dependency of the approximate log-likelihood ratios $\{\log\big(\f_0(\xi_t,\hth_{t-1})/\f_\infty(\xi_t)\big)\}$. Consequently, our last assumption expresses how $\hat{\I}_0,\hat{\I}_\infty,\hat{\!\J}_0$ are related to $\I_0,\I_\infty,\J_0$.
\begin{itemize}
\item[A4:] (Taylor expansion based approximations.) The quantities $\hat{\I}_0$, $\hat{\I}_\infty$, and $\hat{\!\J}_0$ can be written as
\begin{align}\label{eq:assA4}
\begin{split}
\hat{\I}_0&=\I_0-\frac{1}{2w}\mathrm{trace}\{\Sigma_0\F_0\}\big(1+o(1)\big),\\
\hat{\I}_\infty&=\I_\infty+\frac{1}{2w}\mathrm{trace}\{\Sigma_\infty \F_\infty\}\big(1+o(1)\big),\\
\hat{\!\J}_0&=\J_0+\frac{1}{w}\mathrm{trace}\{\Sigma_0\Q_0\}\big(1+o(1)\big),
\end{split}
\end{align}
where $\F_0,\F_\infty,\Q_0$ are matrices of the order of a constant with respect to the window size $w$. 
\end{itemize}
By applying Taylor expansion on the approximate log-likelihood function, it is possible to identify the exact form of $\F_0,\F_\infty,\Q_0$ in \eqref{eq:assA4} for unbiased estimators satisfying Assumptions\,A1 and A2. We present this in the following lemma.

\begin{lemma}\label{lem:QPQ}
The matrices entering in \eqref{eq:assA4} in Assumption\,A4 have the following form
\begin{align*}
\F_0&=\Exp_0^\theta\left[ \left(\frac{\nabla_{\!\theta}\f_0(\xi_1,\theta)}{\f_0(\xi_1,\theta)}\right)\left(\frac{\nabla_{\!\theta}\f_0(\xi_1,\theta)}{\f_0(\xi_1,\theta)}\right)^\intercal\right],\\
\F_\infty&=\Exp_\infty\bigg[\left(\frac{\nabla_{\!\theta}\f_0(\xi_1,\theta_\infty)}{\f_0(\xi_1,\theta_\infty)}\right)\left(\frac{\nabla_{\!\theta}\f_0(\xi_1,\theta_\infty)}{\f_0(\xi_1,\theta_\infty)}\right)^\intercal -\frac{\nabla_{\!\theta\theta}\f_0(\xi_{1},\theta_\infty)}{\f_0(\xi_{1},\theta_\infty)}
\bigg],\\
\Q_0&=\F_0+\Exp_0^\theta\bigg[\left( \log\frac{f_0(\xi_1,\theta)}{f_\infty(\xi_1)}\right)\bigg\{ \frac{ \nabla_{\theta\theta}  f_0(\xi_1,\theta)}{f_0(\xi_1,\theta)} - 
\left(\frac{\nabla_{\!\theta}\f_0(\xi_1,\theta)}{\f_0(\xi_1,\theta)}\right)\left(\frac{\nabla_{\!\theta}\f_0(\xi_1,\theta)}{\f_0(\xi_1,\theta)}\right)^\intercal\bigg\} \bigg],  
\end{align*}
where $\nabla_{\!\theta\theta}\f_0(\xi,\theta)$ denotes the Hessian of\/ $\f_0(\xi,\theta)$ with respect to $\theta$.
\end{lemma}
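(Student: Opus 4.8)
The plan is to obtain all three identities from a single second-order Taylor expansion of the approximate log-likelihood ratio $L(\vartheta):=\log\big(\f_0(\xi_t,\vartheta)/\f_\infty(\xi_t)\big)$ in the parameter argument, expanded around the mean of the estimate $\hth_{t-1}$. Under $\Pro_0^\theta$ this mean is $\theta$ by Assumption\,A1, and under $\Pro_\infty$ it is $\theta_\infty$ by Assumption\,A2; writing $\hth_{t-1}=\theta+\E_{t-1}$ (respectively $\theta_\infty+\E_{t-1}$) with $\Exp[\E_{t-1}]=0$, I would expand
\[
L(\hth_{t-1})=L+(\nabla_{\!\theta}L)^\intercal\E_{t-1}+\tfrac12\E_{t-1}^\intercal(\nabla_{\!\theta\theta}L)\,\E_{t-1}+\cdots,
\]
where $L,\nabla_{\!\theta}L,\nabla_{\!\theta\theta}L$ are evaluated at the expansion point. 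The essential structural fact I would exploit throughout is the independence of $\xi_t$ and $\hth_{t-1}$ recorded in Remark\,\ref{rem:2}: it lets every expectation of a product factor into a product of expectations, so that the gradient/Hessian factors (which depend only on $\xi_t$) decouple from $\E_{t-1}$.

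First I would handle $\hat{\I}_0$ and $\hat{\I}_\infty$, which are just $\pm\Exp[L(\hth_{t-1})]$. By independence the linear term contributes $\Exp[\nabla_{\!\theta}L]^\intercal\Exp[\E_{t-1}]=0$, so only the quadratic term survives, and using $\Exp_0^\theta[\E_{t-1}\E_{t-1}^\intercal]=\tfrac1w\Sigma_0(1+o(1))$ from Assumption\,A1 it collapses to $\tfrac{1}{2w}\mathrm{trace}\big(\Sigma_0\,\Exp_0^\theta[\nabla_{\!\theta\theta}L]\big)(1+o(1))$, and analogously with $\Sigma_\infty$ under $\Pro_\infty$. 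It remains to compute the expected Hessian $\nabla_{\!\theta\theta}L=\nabla_{\!\theta\theta}\f_0/\f_0-(\nabla_{\!\theta}\f_0/\f_0)(\nabla_{\!\theta}\f_0/\f_0)^\intercal$. The crucial asymmetry between the two cases enters here: under $\Pro_0^\theta$ the expectation is taken against the very density being differentiated, so interchanging integration and differentiation gives $\Exp_0^\theta[\nabla_{\!\theta\theta}\f_0/\f_0]=\nabla_{\!\theta\theta}\!\int\f_0\,d\xi=0$, leaving $\Exp_0^\theta[\nabla_{\!\theta\theta}L]=-\F_0$ and hence the stated $\F_0$ with the minus sign matching \eqref{eq:assA4}. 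Under $\Pro_\infty$, however, the expectation is against $\f_\infty\neq\f_0(\cdot,\theta_\infty)$, so the Hessian-of-density term does not vanish and both pieces of $-\Exp_\infty[\nabla_{\!\theta\theta}L]$ persist, yielding exactly the stated $\F_\infty$.

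For $\hat{\!\J}_0=\Exp_0^\theta[L(\hth_{t-1})^2]$ I would expand $L(\vartheta)^2$ instead of $L(\vartheta)$, whose Hessian is $\nabla_{\!\theta\theta}(L^2)=2\,\nabla_{\!\theta}L(\nabla_{\!\theta}L)^\intercal+2L\,\nabla_{\!\theta\theta}L$ by the product rule. Repeating the same argument, with the linear term killed by $\Exp_0^\theta[\E_{t-1}]=0$ and the quadratic term reduced via $\tfrac1w\Sigma_0$, gives $\hat{\!\J}_0=\J_0+\tfrac{1}{w}\mathrm{trace}\big(\Sigma_0\,\Q_0\big)(1+o(1))$, where the factor $\tfrac12\cdot2=1$ produces the coefficient $\tfrac1w$ and $\Q_0=\Exp_0^\theta[\nabla_{\!\theta}L(\nabla_{\!\theta}L)^\intercal]+\Exp_0^\theta[L\,\nabla_{\!\theta\theta}L]$. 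The first expectation is $\F_0$ and the second, since $\nabla_{\!\theta\theta}L$ equals the bracketed term in the statement, is precisely the displayed correction in $\Q_0$, completing the identification.

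I expect the only genuinely delicate point to be the justification of differentiating under the integral sign, both for the Fisher-type identity $\Exp_0^\theta[\nabla_{\!\theta\theta}\f_0/\f_0]=0$ and for controlling the quadratic remainder as $o(1/w)$. Since the excerpt explicitly assumes the validity of the expansion (the paragraph preceding Assumption\,A4) together with the covariance behavior $\tfrac1w\Sigma$ from Assumptions\,A1 and A2, these regularity issues are taken as granted; the remaining work is the purely mechanical bookkeeping of the three Hessians above. The one conceptual rather than computational step worth highlighting is the measure mismatch under $\Pro_\infty$, which is exactly why $\F_\infty$ retains the extra Hessian term and why the admissible set $\Theta$ must exclude the point where $\f_\infty=\f_0(\cdot,\theta_\infty)$.
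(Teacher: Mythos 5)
Your proposal is correct and follows essentially the same route as the paper's proof: a second-order Taylor expansion of $\log\big(\f_0(\xi_t,\hth_{t-1})/\f_\infty(\xi_t)\big)$ (and of its square for $\hat{\!\J}_0$) around the mean of $\hth_{t-1}$, with the linear term killed by the zero-mean error and the independence of $\xi_t$ and $\hth_{t-1}$, and the quadratic term reduced to a trace against $\frac{1}{w}\Sigma$. Your explicit treatment of the measure mismatch under $\Pro_\infty$ (why the Hessian-of-density term survives in $\F_\infty$ but vanishes in $\F_0$) is exactly the bookkeeping the paper omits as "straightforward."
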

\begin{proof}
Demonstrating the validity of these formulas presents no particular difficulty. We apply Taylor expansion on $\log\big(\f_0(\xi_t,\hth_{t-1})/\f_\infty(\xi_t)\big)$ with respect to $\hth_{t-1}$ around its mean $\theta$ and retain the first three terms. Then we make use of the independence between $\xi_t$ and $\hth_{t-1}$ and therefore between $\xi_t$ and $\E_{t-1}$ (the estimation error of $\hth_{t-1}$). The first order term in the expansion has average 0 because $\E_{t-1}$ is zero mean. Consequently, we end up with the expectation of the second order term with respect to $\E_{t-1}$ while, as we assumed, higher order terms are regarded as negligible and captured by the $o(1)$ components in \eqref{eq:assA4}. In order to obtain the desired results we must also note that $\Exp_0^{\theta}[\nabla_{\!\theta}\f_0(\xi_1,\theta)/\f_0(\xi_1,\theta)]=0$, $\Exp_0^{\theta}[\nabla_{\!\theta\theta}\f_0(\xi_1,\theta)/\f_0(\xi_1,\theta)]=0$ and that for a deterministic matrix $\Q$, we can write $\Exp[\E_{t-1}^\intercal\Q\E_{t-1}]=\Exp[\mathrm{trace}\{\Q\E_{t-1}\E_{t-1}^\intercal\}]=\mathrm{trace}\{\Q\Exp[\E_{t-1}\E_{t-1}^\intercal]\}$. Computations are straightforward, thus we omit further details. 
\end{proof}
\begin{remark}[Examples of Gaussian distribution]
    We provide a concrete example of the quantities above under Gaussian distributions. Assume one-dimensional Gaussian mean shift from $\N(0,1)$ to $\N(\theta,1)$ with post-change mean equal to 1, and assume the set $\Theta=\{\theta: \theta\geq 0.5\}$ as the possible post-change mean values. When using the maximum likelihood estimator, we have $\hat\theta_t = \frac{1}{w}\sum_{i=0}^{w-1} \xi_{t-i}$ if $\frac{1}{w}\sum_{i=0}^{w-1} \xi_{t-i}\geq 0.5$ and $\hat\theta_t = 0.5$ otherwise. Under the post-change regime, the asymptotic distribution of $\hat\theta_t$ is $\N(\theta,1/w)$, while under the pre-change regime, $\hat\theta_t$ converges in probability to $0.5$. In this case we have $\theta=1$, $\theta_\infty=0.5$, $\I_0=\theta^2/2$, $\I_\infty = 0.5^2/2$, $\J_0=\theta^4/4+\theta^2$, and Fisher information $\F_0=1$, $\F_\infty=1$, $\Q_0=\F_0-\theta^2/2=1/2$, and $\hat \I_0\approx \I_0 -1/(2w)$, $\hat \I_\infty=1/8+o(1/w)$, $\hat \J_0\approx \J_0+1/(2w)$. It is worthwhile commenting that when the pre-change distribution is non-normal, the parameter set $\Theta$ will equal to $\Real$ and the MLE $\hth_t$ is asymptotically normal under mild conditions for both the pre- and post-change regimes.
\end{remark}

Let us now make some useful observations regarding the quantities we introduced above. From Equation \eqref{eq:assA4} we deduce that the window size $w$ must be larger than $\mathrm{trace}\{\Sigma_0\F_0\}/\I_0$ so that $\hat{\I}_0>0$, a property which is necessary for successful detection. Indeed, only when $\hat{\I}_0>0$, the WLCUSUM statistic will exhibit a positive drift after change, forcing the corresponding statistic to increase and finally exceed the positive threshold. Meanwhile, we require the pre-change drift $-\hat{\I}_\infty$ to be negative; this holds in general for any $w$ and any estimate as we discuss in detail in Section\,\ref{sec:theory}.
%In order to have $\hat{\I}_0>0$, both the true information number $\I_0$ and the dimension of the parameter $\theta$ will affect the choice of $w$. For small $\I_0$, we need a relatively larger $w$, and when the parameter $\theta$ is high-dimensional, note that the smallest $\mathrm{trace}\{\Sigma_0\F_0\}$ we can have is the dimension, so we also need a larger $w$ for high-dimensional $\theta$. Moreover, the estimation method will also affect the choice of $w$ since different estimation methods may lead to different $\Sigma_0$. 

We note from its definition that $\F_0$ is the Fisher Information matrix. Since for any unbiased estimator we have validity of the Cramer-Rao Lower Bound, namely $\Sigma_0 \succcurlyeq \big(\F_0\big)^{-1}$, we conclude that
\begin{equation}
\mathrm{trace}\{\Sigma_0\F_0\}=\mathrm{trace}\{\F_0^\half\Sigma_0\F_0^\half\}\geq
\mathrm{trace}\big\{\F_0^\half\F_0^{-1}\F_0^\half\big\}=K;
\label{eq:CRLB}
\end{equation}
where, we recall that $K$ is the dimension of the parameter vector $\theta$. Therefore, the maximum likelihood estimate asymptotically {\it maximizes} the approximate KL information number $\hat{\I}_0$, thus resulting in the smallest detection delay among all consistent estimators.

Consider now the \textit{mismatched} version of the KL information number $\Exp_0^\theta\left[\log(\f_0(\xi_1,\vartheta)/\f_\infty(\xi_1))\right]$ with $\vartheta\neq \theta$, then
\begin{align*}
\Exp_0^\theta\left[\log\frac{\f_0(\xi_1,\vartheta)}{\f_\infty(\xi_1)}\right]&=\Exp_0^\theta\left[\log\frac{\f_0(\xi_1,\theta)}{\f_\infty(\xi_1)}\right]+\Exp_0^\theta\left[\log\frac{\f_0(\xi_1,\vartheta)}{\f_0(\xi_1,\theta)}\right] \leq
\Exp_0^\theta\left[\log\frac{\f_0(\xi_1,\theta)}{\f_\infty(\xi_1)}\right]=\I_0.    
\end{align*}
This observation and the fact that $\I_0$ is constant allows us to deduce that
\begin{equation}
\begin{aligned}
\Exp_0^\theta\left[\log\frac{\f_0(\xi_t,\hth_{t-1})}{\f_\infty(\xi_t)}\Big|\cF_{t-1}\right]\leq\I_0  \ \Rightarrow \  
\hat{\I}_0=\Exp_0^\theta\left[\log\frac{\f_0(\xi_t,\hth_{t-1})}{\f_\infty(\xi_t)}\right]\leq\I_0,
\end{aligned}
\label{eq:inequ}
\end{equation}
which will be used in the derivations that follow.

\section{Theoretical Analysis}\label{sec:theory}

We are now ready to analyze the proposed WLCUSUM test. We begin by considering next the average false alarm.
\begin{lemma}[ARL of WLCUSUM]\label{lem:2}
The WLCUSUM defined in \eqref{eq:statACUSUM},\eqref{eq:stACUSUM} satisfies
$$
\Exp_\infty[\T]\geq e^{\nu}.
$$
Additionally, for the drift under the pre-change regime, we have $-\hat{\I}_\infty=\Exp_\infty[\log\frac{\f_0(\xi_t,\hth_{t-1})}{\f_\infty(\xi_t)}]<0$.
\end{lemma}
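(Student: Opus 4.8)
The plan is to treat the two assertions in tandem, since both rest on the same conditioning step. Introduce the shorthand $Z_t=\log\big(\f_0(\xi_t,\hth_{t-1})/\f_\infty(\xi_t)\big)$. By Remark~\ref{rem:2} the estimate $\hth_{t-1}$ is $\cF_{t-1}$-measurable while $\xi_t$ is independent of $\cF_{t-1}$ under $\Pro_\infty$, so conditioning on $\cF_{t-1}$ freezes $\hth_{t-1}$ at some value $\vartheta\in\Theta$. First I would record the exponential identity that drives the ARL bound: $\Exp_\infty[e^{Z_t}\mid\cF_{t-1}]=\int(\f_0(\xi,\vartheta)/\f_\infty(\xi))\,\f_\infty(\xi)\,d\xi=\int\f_0(\xi,\vartheta)\,d\xi=1$ at $\vartheta=\hth_{t-1}$. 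The same conditioning proves the negative drift: the inner conditional expectation $\Exp_\infty[Z_t\mid\cF_{t-1}]$ equals $-D(\f_\infty\,\|\,\f_0(\cdot,\vartheta))$, which by Jensen's inequality (strict concavity of the logarithm) is $\le\log\Exp_\infty[\f_0(\xi_t,\vartheta)/\f_\infty(\xi_t)]=\log 1=0$, with equality only if $\f_0(\cdot,\vartheta)=\f_\infty$ almost everywhere. Since the standing assumption excludes $\f_\infty$ from the post-change family $\{\f_0(\cdot,\theta):\theta\in\Theta\}$, the inequality is strict for every realized $\vartheta\in\Theta$; taking the outer expectation over $\hth_{t-1}$ then gives $-\hat{\I}_\infty<0$.

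For the ARL bound I would convert the identity $\Exp_\infty[e^{Z_t}\mid\cF_{t-1}]=1$ into a supermartingale. Set $V_t=e^{\S_t}$ for $t\ge w$, with $V_w=e^{\S_w}=1$. The recursion $\S_t=\S_{t-1}^{+}+Z_t$ yields $V_t=\max(V_{t-1},1)\,e^{Z_t}$, so $\Exp_\infty[V_t\mid\cF_{t-1}]=\max(V_{t-1},1)\le V_{t-1}+1$, where the last step uses $V_{t-1}>0$. A short induction gives $\Exp_\infty[V_t]\le 1+(t-w)<\infty$, so each $V_t$ is integrable, and the bound rewrites as $\Exp_\infty[V_t-t\mid\cF_{t-1}]\le V_{t-1}-(t-1)$; hence $\{V_t-t\}_{t\ge w}$ is a $\Pro_\infty$-supermartingale. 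It is worth stressing that the $w$-dependence of the increments is genuinely harmless here, because only the single-step conditioning above is needed, in contrast to the detection-delay analysis to follow.

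Finally I would apply optional stopping to the bounded stopping time $\T\wedge n$, giving $\Exp_\infty[V_{\T\wedge n}]-\Exp_\infty[\T\wedge n]\le V_w-w=1-w$. On $\{\T\le n\}$ we have $V_{\T\wedge n}=e^{\S_\T}\ge e^{\nu}$ and $V_{\T\wedge n}\ge0$ everywhere, so $\Exp_\infty[V_{\T\wedge n}]\ge e^{\nu}\Pro_\infty(\T\le n)$, whence $\Exp_\infty[\T\wedge n]\ge e^{\nu}\Pro_\infty(\T\le n)-1+w$. Letting $n\to\infty$ — the claim being trivial when $\Exp_\infty[\T]=\infty$, and otherwise $\T<\infty$ a.s.\ so that $\Pro_\infty(\T\le n)\to1$ while monotone convergence handles the left-hand side — produces $\Exp_\infty[\T]\ge e^{\nu}+w-1\ge e^{\nu}$ since $w\ge1$, which in fact is slightly sharper than the stated bound. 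The only delicate point is this last passage to the limit: one must confirm the integrability justifying optional stopping on the truncated process and dispose of the $\Exp_\infty[\T]=\infty$ case; there is no deeper structural obstacle.
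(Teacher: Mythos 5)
Your proof is correct and follows essentially the same route as the paper's: the heart of both arguments is that $\Exp_\infty\big[e^{Z_t}\mid\cF_{t-1}\big]=1$ because $\f_0(\cdot,\hth_{t-1})$ is a legitimate density once $\hth_{t-1}$ is frozen, combined with the inequality $\max\{x,1\}\leq x+1$ and the observation that the standing assumption ($\f_\infty\notin\{\f_0(\cdot,\theta):\theta\in\Theta\}$) makes the conditional drift strictly negative. The only difference is packaging --- the paper introduces a dominating Shiryaev--Roberts-type statistic $\L_t\geq e^{\S_t}$ for which $\L_t-t$ is a martingale, whereas you show directly that $e^{\S_t}-t$ is a supermartingale and apply optional stopping to the truncation $\T\wedge n$; your limit argument is, if anything, the more careful justification of the optional-stopping step.
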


\begin{proof}
We compute the average false alarm period using similar ideas as in \cite[Lemma 8.2.1]{tartakovsky2014sequential}.
For $t>w$ let us define a Shiryaev-Roberts like statistic $\{\L_t\}$ through the recursion
$$
\L_t=(\L_{t-1}+1)\frac{\f_0(\xi_t,\hth_{t-1})}{\f_\infty(\xi_t)},~\L_w=0.
$$
Interestingly, $\{\L_t\}$ preserves the characteristic martingale property with respect to the $\Pro_\infty$ measure enjoyed by the classical Shiryaev-Roberts (SR) test statistic
$$
L_t=(L_{t-1}+1)\frac{\f_0(\xi_t,\theta)}{\f_\infty(\xi_t)},~L_0=0,
$$
even when we replace $\theta$ with the estimates $\{\hth_t\}$. Indeed we observe that
\begin{align*}
\Exp_\infty[\L_t-t|\cF_{t-1}] & =\Exp_\infty\left[(\L_{t-1}+1)\frac{\f_0(\xi_t,\hth_{t-1})}{\f_\infty(\xi_t)}-t\Big|\cF_{t-1}\right] =\L_{t-1}-(t-1).    
\end{align*}
The last equality is true because given $\cF_{t-1}$ we have $\hth_{t-1}$ fixed, therefore $f_0(\xi_t,\hth_{t-1})$ is a legitimate probability density for $\xi_t$. %Define a new stopping time 
%\[T:=\inf\{t>w: \L_t\geq e^{\nu}\},\] 
The martingale property of $\{\L_t-t\}$ and usage of Optional Sampling allows us to write for any stopping time $T$ with finite expectation that
\begin{equation}
\Exp_\infty[\L_T-T]=\Exp_\infty[\L_w-w]=-w \Rightarrow \Exp_\infty[T]-w=\Exp_\infty[\L_T].
\label{eq:A1}
\end{equation}

Let us now recall the WLCUSUM update in \eqref{eq:statACUSUM} which after exponentiation can be equivalently written as
$$
e^{\S_t}=\max\{e^{\S_{t-1}},1\}\frac{\f_0(\xi_t,\hth_{t-1})}{\f_\infty(\xi_t)},~e^{\S_w}=1.
$$
Using induction, the fact that $e^{\S_{w+1}}=\L_{w+1}$ and that for $x\geq0$,  $x+1\geq\max\{x,1\}$, it is straightforward to prove that for $t>w$ we have $\L_t\geq e^{\S_t}$. This suggests that the SR-statistic is larger than the exponential of the WLCUSUM statistic.
%provides an upper bound for the CUSUM test statistic, and for the WLCUSUM defined in \eqref{eq:statACUSUM},\eqref{eq:stACUSUM}, we will therefore have that $\T\geq T$ since it will take more time for the WLCUSUM statistic the exceed the same threshold. 
With this observation and using \eqref{eq:A1} we can now write
% $$
% \Exp_\infty[\T]=w+\Exp_\infty[\L_{\T}]\geq w+\Exp_\infty[e^{\S_{\T}}]\geq w+e^\nu,
% $$
$$
e^\nu\leq \Exp_\infty[e^{\S_\T}]\leq\Exp_\infty[\L_{\T}]=\Exp_\infty[\T]-w\leq\Exp_\infty[\T],
%\Exp_\infty[\T]\geq\Exp_\infty[T]\geq \Exp_\infty[T]-w=\Exp_\infty[\L_{T}]\geq e^\nu,
$$
which proves the desired inequality.
We also observe that for any $\theta$ we have $\Exp_\infty[\log\frac{\f_0(\xi_1,\theta)}{\f_\infty(\xi_1)}]<0$, therefore $\Exp_\infty[\log\frac{\f_0(\xi_t,\hth_{t-1})}{\f_\infty(\xi_t)}|\cF_{t-1}]<0$, which in turn implies
$\Exp_\infty[\log\frac{\f_0(\xi_t,\hth_{t-1})}{\f_\infty(\xi_t)}]=-\hat{\I}_\infty<0$.
\end{proof}

Equating the lower bound $e^\nu$ provided by Lemma\,\ref{lem:2} to the desired average false alarm period $\gamma$, assures that the false alarm constraint $\Exp_\infty[\T]\geq\gamma$ is satisfied. Consequently, the threshold we select to use is equal to
\begin{equation}
\nu=\log\gamma.
\label{eq:gamma}
\end{equation}

Consider now the negative drift %$\Exp_\infty[\log\big(\f_0(\xi_t,\hth_{t-1})/\f_\infty(\xi_t)\big)]=
$-\hat{\I}_\infty$ mentioned in the lemma that appears under the pre-change regime. The drift under the $\Pro_\infty$ measure must be negative, because this assures restarts of the process and also that the average false alarm period will be an exponential function of the threshold. Since the drift is equal to $-\hat{\I}_\infty$, we have that $\hat{\I}_\infty$ must be positive. As we argued in the proof of Lemma\,\ref{lem:2} the approximate KL information number $\hat{\I}_\infty$ is indeed positive, and using \eqref{eq:assA4} we can study how the estimator affects this positive value. From \eqref{eq:assA4} we can see that
the first term of $\hat{\I}_\infty$ on the right-hand side is positive. One may wonder whether the second term due to the estimation error may also contribute to the positivity of $\hat{\I}_\infty$. Of course, the sign of this term is estimator-dependent. However, in the case of the MLE, it is easy to see that $-\F_\infty$ is the Hessian of $\Exp_\infty[\log\f_0(\xi_1,\theta)]$ evaluated at the point $\theta_\infty$ where this function is maximized\footnote{Provided of course that this value is not on the border of the allowable parameter set $\Theta$ and corresponds to an unconstrained optimizer.}. Consequently, $-\F_\infty$ is negative definite and therefore $\F_\infty$ positive definite. This means that the sign of $\mathrm{trace}\{\F_\infty\Sigma_\infty\}$ will be positive as well, contributing to the positivity of $\hat{\I}_\infty$ and therefore the negativity of the drift. We must, however, emphasize that other estimators do not necessarily share this desirable property and positivity is assured for sufficiently large window $w$.

The next step in our analysis consists in computing the worst-case average detection delay of \hbox{WLCUSUM}. In the following lemma, we present an important property for our detection strategy, which is also shared by the classical CUSUM and considerably facilitates the performance computation.
\begin{lemma}[Worst-case average detection delay]\label{lem:3}
For any changepoint $\tau\geq0$ we have that
\begin{align*}
&\mathrm{ess}\sup\Exp_\tau^\theta[\T-\tau|\T>\tau,\cF_\tau] \leq w + \Exp_\tau^\theta\Big[\Exp_\tau^\theta[(\T_\tau-\tau-w)\ind{\T_\tau>\tau+w}|\xi_{\tau+1}^{\tau+w}]\Big] =\Exp_0^\theta[\T].   
\end{align*}
\end{lemma}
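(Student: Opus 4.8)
The plan is to split the post-change horizon at time $\tau+w$: the first $w$ increments after the change are unavoidably correlated with the pre-change history through the estimates $\hth_\tau,\dots,\hth_{\tau+w-1}$, so I would pay for them crudely, while the increments from time $\tau+w$ onward depend only on post-change data and can be handled by a renewal/monotonicity argument. The starting point is the pathwise bound $\T-\tau\leq w+(\T-\tau-w)\ind{\T>\tau+w}$, which holds because if the test stops within the window $(\tau,\tau+w]$ the delay is at most $w$, and otherwise the delay is exactly $w$ plus the overshoot $\T-\tau-w$. Applying $\Exp_\tau^\theta[\,\cdot\mid\T>\tau,\cF_\tau]$ reduces the claim to controlling the conditional mean of $(\T-\tau-w)\ind{\T>\tau+w}$.

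The key observation is that for $t>\tau+w$ the estimate $\hth_{t-1}$ is built from $\xi_{t-1},\dots,\xi_{t-w}$, whose indices all exceed $\tau$; hence all increments $Z_t=\log\big(\f_0(\xi_t,\hth_{t-1})/\f_\infty(\xi_t)\big)$ driving $\S_t$ beyond $\tau+w$ are functions of post-change data only, and they interact with $\cF_\tau$ solely through the effective starting value $\S_{\tau+w}^+\geq0$. Since the map $x\mapsto x^++z$ is nondecreasing, the recursion $\S_t=\S_{t-1}^++Z_t$ is monotone in its initial condition, so the crossing time is nonincreasing in the start. I would therefore introduce the restarted statistic $\S^{(\tau)}$ with $\S^{(\tau)}_{\tau+w}=0$, driven by the same increments $\{Z_t\}_{t>\tau+w}$, and its stopping time $\T_\tau=\inf\{t>\tau+w:\S^{(\tau)}_t\geq\nu\}$. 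Monotonicity gives, pathwise, $(\T-\tau-w)\ind{\T>\tau+w}\leq \T_\tau-\tau-w=(\T_\tau-\tau-w)\ind{\T_\tau>\tau+w}$, the last indicator being identically $1$. Because $\S^{(\tau)}$ discards the pre-change history, $\T_\tau$ is independent of $\cF_\tau$ and of the $\cF_\tau$-measurable event $\{\T>\tau\}$, so the conditional expectation of the right-hand side collapses to its unconditional value, a constant free of $\cF_\tau$. Taking the essential supremum over $\cF_\tau$ then yields the first inequality, and the tower property over $\xi_{\tau+1}^{\tau+w}$ writes it in the stated nested form.

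For the concluding equality I would invoke stationarity: under $\Pro_\tau^\theta$ the block $\xi_{\tau+1},\xi_{\tau+2},\dots$ is i.i.d.\ with the same law that $\xi_1,\xi_2,\dots$ have under $\Pro_0^\theta$, and $\S^{(\tau)}$ restarted at $\tau+w$ with value $0$ is the exact time-shift of the WLCUSUM started at $w$ with value $0$. Consequently $\T_\tau-\tau$ has the same distribution as $\T$ under $\Pro_0^\theta$, so $w+\Exp_\tau^\theta[\T_\tau-\tau-w]=\Exp_\tau^\theta[\T_\tau-\tau]=\Exp_0^\theta[\T]$, which matches the middle expression and gives the final equality.

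I expect the essential-supremum step to be the main obstacle, precisely because of the $w$-dependency: for the exact CUSUM the post-change increments are independent of $\cF_\tau$ and the worst case over the past is immediately the reset $\S_\tau^+=0$, whereas here the first $w$ post-change increments genuinely depend on the pre-change data, so this clean renewal argument is unavailable. The device that rescues it is to absorb those $w$ contaminated steps into the additive constant $w$ and to run the monotonicity-plus-decoupling argument only from $\tau+w$ onward, where the estimates are provably free of pre-change samples; the care lies in verifying that $\T_\tau$ is independent of $\cF_\tau$ and dominates $\T-\tau-w$ on $\{\T>\tau+w\}$ simultaneously.
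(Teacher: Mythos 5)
Your proposal is correct and follows essentially the same route as the paper's proof: the pathwise bound $\T-\tau\leq w+(\T-\tau-w)\ind{\T>\tau+w}$, domination of $\T$ by the stopping time $\T_\tau$ of a statistic restarted at $\tau+w$ (the paper's $\S_{t,\tau}$, obtained via the same monotonicity of $x\mapsto x^+ + z$ in the initial condition), independence of $\T_\tau$ from $\cF_\tau$ because the restarted statistic uses only post-change samples, and stationarity to identify the result with $\Exp_0^\theta[\T]$. No gaps.
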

\begin{proof}
Consider first a change at $\tau=0$. Since we start testing at $w+1$ we can write
$$
\Exp_0^\theta[\T]=w+\Exp_0^\theta\Big[\Exp_0^\theta[(\T-w)\ind{\T>w}|\xi_1^w]\Big].
$$
Suppose now that the change happens at $\tau$ then for $t>\tau+w$ it is clear that the test statistic $\S_t$ is larger than the test statistic $\S_{t,\tau}$ generated by \textit{starting} the WLCUSUM at time $\tau+w+1$ and using the samples $\{\xi_{\tau+w},\ldots,\xi_{\tau+1}\}$ to form the first estimate $\hth_{\tau+w}$.  
This suggests that if we use $\S_{t,\tau}$ instead of $\S_t$ in \eqref{eq:stACUSUM} then we will stop at a time $\T_\tau$ that satisties $\T_\tau\geq\T$. Clearly we also have that $\T_\tau$ is independent from $\cF_\tau$, because in $\S_{t,\tau}$ the data $\cF_\tau$ are not being used. With these observations in mind we can write
\begin{equation*}
\begin{aligned}
&\Exp_\tau^\theta[\T-\tau|\T>\tau,\cF_\tau] \\
& \leq w+\Exp_\tau^\theta[(\T-\tau-w)\ind{\T>\tau+w}|\T>\tau,\cF_\tau]\\
& =w+\Exp_\tau^\theta\Big[\Exp_\tau^\theta[(\T-\tau-w)\ind{\T>\tau+w}|\xi_{\tau+1}^{\tau+w},\cF_\tau]|\T>\tau,\cF_\tau\Big]\\
&\leq w+\Exp_\tau^\theta\Big[\Exp_\tau^\theta[({\T_\tau}\!-\!\tau \!-\!
w)\ind{{\T_\tau}>\tau+w}|\xi_{\tau+1}^{\tau+w},\! \cF_\tau]|\T>\tau,  
\cF_\tau\Big]\\
&=w+\Exp_\tau^\theta\Big[\Exp_\tau^\theta[({\T_\tau}-\tau-w)\ind{{\T_\tau}>\tau+w}|\xi_{\tau+1}^{\tau+w}]\Big]=\Exp_0^\theta[\T].
\end{aligned}    
\end{equation*}
The last equality is true because starting the procedure at $\tau+w+1$ when the change occurs at $\tau$ does not employ any information from $\cF_\tau$, consequently it is independent from $\cF_\tau$. Therefore, statistically this is the same as starting at $w+1$ with the change occurring at 0.
\end{proof}

The analysis of $\Exp_0^\theta[\T]$ is not simple due to the particular updating rule of $\S_t$. Since we are only interested in finding a suitable upper bound, we are going to introduce an alternative stopping time $\T'$, which is easier to analyze. Its definition and connection to $\T$ are presented in the following lemma.
\begin{lemma}\label{lem:new1}
Assume that the observations follow the post-change regime and that $\hat{\I}_0>0$. For $t>w$ define the process $\{\U_t\}$ using the recursion
\begin{equation}
\U_t=\U_{t-1}+\log\frac{\f_0(\xi_t,\hth_{t-1})}{\f_\infty(\xi_t)},~~\U_w=0,
\label{eq:statU}
\end{equation}
and the stopping time
$$
\T'=\inf\{t>w:\U_t\geq\nu\},
$$
then $\T'$ stops a.s. and we have $\T'\geq\T$. 
\end{lemma}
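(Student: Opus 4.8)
The plan is to combine a deterministic, pathwise comparison between the reflected WLCUSUM statistic $\{\S_t\}$ and the non-reflected random walk $\{\U_t\}$ with a law of large numbers for the increments that produces a genuine positive drift after the change. The inequality $\T'\geq\T$ will be a consequence of the pathwise comparison alone, while almost-sure finiteness of $\T'$ will follow from the positive drift forcing $\U_t\to\infty$ and hence an eventual crossing of the finite threshold $\nu$.

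First I would show by induction on $t$ that $\S_t\geq\U_t$ for every $t>w$. At $t=w$ both statistics equal $0$, and the two recursions coincide at $t=w+1$. Assuming $\S_{t-1}\geq\U_{t-1}$ and using $x^+\geq x$, I obtain
$$
\S_t=\S_{t-1}^+ + \log\frac{\f_0(\xi_t,\hth_{t-1})}{\f_\infty(\xi_t)}\geq \S_{t-1}+\log\frac{\f_0(\xi_t,\hth_{t-1})}{\f_\infty(\xi_t)}\geq \U_{t-1}+\log\frac{\f_0(\xi_t,\hth_{t-1})}{\f_\infty(\xi_t)}=\U_t,
$$
which closes the induction. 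Since $\S_t\geq\U_t$ pathwise, the set of times at which $\U_t\geq\nu$ is contained in the set at which $\S_t\geq\nu$; taking infima (with the convention $\inf\emptyset=+\infty$) gives $\T'\geq\T$ unconditionally.

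It then remains to prove $\T'<\infty$ almost surely under $\Pro_0^\theta$. Writing $X_j=\log\big(\f_0(\xi_j,\hth_{j-1})/\f_\infty(\xi_j)\big)$, we have $\U_t=\sum_{j=w+1}^{t}X_j$ with $\Exp_0^\theta[X_j]=\hat{\I}_0>0$ by hypothesis. The structural point (Remark \ref{rem:2}) is that $X_j$ depends only on the block $\{\xi_{j-w},\ldots,\xi_j\}$ of $w+1$ i.i.d.\ observations, so $\{X_j\}$ is stationary and $w$-dependent. I would therefore partition the index set $\{w+1,w+2,\ldots\}$ into $w+1$ arithmetic progressions of common difference $w+1$: along any such progression consecutive indices differ by more than $w$, so the defining observation blocks are disjoint and the corresponding terms are i.i.d.\ with common mean $\hat{\I}_0$ and finite second moment (the boundedness of $\hat{\!\J}_0$ under A3--A4 gives integrability). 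Applying the classical SLLN to each of the $w+1$ subsequences and recombining yields $\U_t/(t-w)\to\hat{\I}_0>0$ almost surely, hence $\U_t\to\infty$; since $\nu$ is finite, $\{\U_t\}$ exceeds $\nu$ in finite time and $\T'<\infty$ a.s.

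The induction and the threshold-crossing conclusion are immediate, so the only substantive step is the law of large numbers for the $w$-dependent, non-independent increments $\{X_j\}$. The subsequence decomposition reduces this to the i.i.d.\ SLLN, so I expect the main obstacle to be verification rather than depth: namely, confirming that each arithmetic-progression subsequence is genuinely i.i.d.\ (which rests on the $\xi$'s being i.i.d.\ and $\hth$ being a fixed sliding-window functional), and handling the remainder block of fewer than $w+1$ indices when $t-w$ is not a multiple of $w+1$, whose contribution is $o(t)$ and does not affect the limit.
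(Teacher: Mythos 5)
Your proof is correct and follows essentially the same route as the paper: the induction $\S_t\geq\U_t$ via $x^+\geq x$ is exactly the paper's argument, and the paper simply asserts that a.s.\ finiteness of $\T'$ follows from $\hat{\I}_0>0$, which your decomposition of the $w$-dependent increments into $w+1$ i.i.d.\ arithmetic-progression subsequences makes rigorous via the classical SLLN. That extra detail is a valid (and standard) way to justify the step the paper leaves implicit.
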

\begin{proof}
Because $\S_{w+1}=\U_{w+1}=\log\frac{\f_0(\xi_{w+1},\hth_{w})}{\f_\infty(\xi_{w+1})}$ and $x^+=\max\{x,0\}\geq x$, using induction it is simple to prove that $\S_t\geq\U_t$ for $t>w$. This of course implies that $\U_t$ will require more time than $\S_t$ to reach the same threshold $\nu$, which means that $\T'\geq\T$. Consequently, $\Exp_0^\theta[\T']\geq\Exp_0^\theta[\T]$. The fact that $\T'$ will stop a.s. is guaranteed by the positivity of $\hat{\I}_0=\Exp_0^\theta[\log\frac{\f_0(\xi_t,\hth_{t-1})}{\f_\infty(\xi_t)}]$.
\end{proof}

Finding a bound for $\Exp_0^\theta[\T']$ is simpler because as we see from its definition in \eqref{eq:statU} we have that $\U_t$ is a sum of stationary (but $w$-dependent) terms. The estimate we are looking for is provided in the next theorem which identifies, with the help of Lemma\,\ref{lem:new1}, an upper bound for the performance of the WLCUSUM test.
\begin{theorem}[WADD of WLCUSUM]\label{th:1}
Assume $\hat{\I}_0>0$. We have the following upper bound for the worst-case performance of WLCUSUM
\begin{equation}
\begin{aligned}
\Exp_0^\theta[\T]& \leq\Exp_0^\theta[\T'] \leq\frac{\log\gamma+\frac{\hat{\!\J}_0}{\hat{\I}_0}+\Big(\frac{\hat{\!\J}_0}{\hat{\I}_0}\log\gamma\Big)^{\half}+w\I_0+\Big(\frac{\hat{\!\J}_0}{\hat{\I}_0}\I_0w\Big)^{\half}}{\hat{\I}_0},    
\end{aligned}
\label{eq:th1}
\end{equation}
where $\hat{\I}_0$ and $\hat{\!\J}_0$ were defined in Assumption\,A4.
\end{theorem}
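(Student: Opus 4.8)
By Lemma \ref{lem:new1} we have $\Exp_0^\theta[\T]\le\Exp_0^\theta[\T']$, so the plan is to bound $\Exp_0^\theta[\T']$, where $\U_t=\sum_{j=w+1}^{t}Z_j$ with $Z_j=\log\big(\f_0(\xi_j,\hth_{j-1})/\f_\infty(\xi_j)\big)$. Under $\Pro_0^\theta$ the increments $\{Z_j\}$ are stationary and $w$-dependent (Remark \ref{rem:2}), with unconditional mean $\Exp_0^\theta[Z_j]=\hat{\I}_0>0$, second moment $\Exp_0^\theta[Z_j^2]=\hat{\!\J}_0$, and, crucially, the conditional bound $\Exp_0^\theta[Z_j\mid\cF_{j-1}]\le\I_0$ supplied by \eqref{eq:inequ}. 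Thus $\U$ is a random walk with positive drift $\hat{\I}_0$ crossing the level $\nu=\log\gamma$ (see \eqref{eq:gamma}); the heuristic answer is $\Exp_0^\theta[\T']\approx w+\nu/\hat{\I}_0$, and the theorem quantifies the corrections.

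The first thing to settle is \emph{which} tool can deliver an \emph{upper} bound. Wald-type identities are not available here: because of the $w$-dependence the unconditional mean cannot be pulled through the stopping time, and the only martingale structure one gets from $\Exp_0^\theta[Z_j\mid\cF_{j-1}]\le\I_0$ is that $\{\U_t-(t-w)\I_0\}$ is a \emph{supermartingale}. Optional stopping then yields $\Exp_0^\theta[\U_{\T'}]\le\I_0\,\Exp_0^\theta[\T'-w]$, i.e.\ only a \emph{lower} bound $\Exp_0^\theta[\T'-w]\ge\nu/\I_0$. Consequently I would abandon identity-based arguments and instead control directly how fast the partial sum $\U_{w+n}$, of mean $n\hat{\I}_0$, overtakes $\nu$. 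Using the tail-sum formula together with the inclusion $\{\T'>w+n\}\subseteq\{\U_{w+n}<\nu\}$, I would write
\begin{equation*}
\Exp_0^\theta[\T'-w]=\sum_{n\ge0}\Pro_0^\theta[\T'>w+n]\le\sum_{n\ge0}\Pro_0^\theta[\U_{w+n}<\nu].
\end{equation*}

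The technical heart, and the main obstacle, is a sharp deviation bound for $\Pro_0^\theta[\U_{w+n}<\nu]$ when $n\hat{\I}_0>\nu$, for a sum of $w$-\emph{dependent} increments. Classical overshoot/renewal theory does not apply, which is exactly the gap the paper advertises filling. The route I would take is to partition $\{w+1,\dots,w+n\}$ into consecutive blocks of length of order $w$, so that block-sums separated by at least one block become independent; applying a concentration inequality to each of the two interleaved independent families, with block means controlled by $\hat{\I}_0$ (and by $\I_0$ for the worst-case correlated contribution \emph{within} a block, via $\Exp_0^\theta[Z_j\mid\cF_{j-1}]\le\I_0$) and block variances controlled by $\hat{\!\J}_0$. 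The martingale part of $\U$ has orthogonal increments and hence second moment at most $n\hat{\!\J}_0$ (clean, no $w$); it is the correlated/compensator part, carrying the within-block correlation, that injects the $w$-dependence. This blocking is precisely what effectively shifts the crossing level by an amount of order $w\I_0$ and inflates the fluctuation scale, which is the source of the correction terms beyond the i.i.d.\ ones.

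Finally I would assemble the pieces: splitting the tail sum at an optimally chosen cutoff, the regime where the probability is bounded by $1$ contributes the deterministic part $\big(\nu+w\I_0\big)/\hat{\I}_0$, while the regime controlled by the concentration bound contributes the square-root corrections. Writing $\beta=\hat{\!\J}_0/\hat{\I}_0$, the i.i.d.-type part assembles into $(\nu+\beta+\sqrt{\beta\nu})/\hat{\I}_0$, and the $w$-dependence part produces the companion correction $(w\I_0+\sqrt{\beta\I_0 w})/\hat{\I}_0$ (the same functional form with $\nu$ replaced by $\I_0 w$), which together match \eqref{eq:th1} after adding back the initial $w$ absorbed into the deterministic term. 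The delicate point throughout is the $w$-dependence: it both forces the concentration route (no Wald identity gives the upper bound) and degrades the concentration of the partial sums, and the blocking argument must be tuned so that the surviving corrections are genuinely linear in $w$ and $\sqrt{w}$ rather than a coarser cross term such as $\sqrt{w\nu}$; lower-order $o(1)$ contributions from Assumption\,A4 are then absorbed into the stated bound.
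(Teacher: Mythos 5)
There is a genuine gap, and it starts with your decision to abandon identity-based arguments. You assert that ``Wald-type identities are not available here'' and that the only usable structure is the supermartingale $\{\U_t-(t-w)\I_0\}$, which gives only a lower bound. That is exactly backwards from what the paper does: the key trick is to extend the sum past the stopping time by $w$ terms and write
\begin{equation*}
\Exp_0^\theta[\U_{\T'}]=\Exp_0^\theta\Big[\sum_{t=w+1}^{\T'+w}\ell_t\Big]-\Exp_0^\theta\Big[\sum_{t=\T'+1}^{\T'+w}\ell_t\Big].
\end{equation*}
In the first (extended) sum the indicator $\ind{\T'\geq t-w}$ is $\cF_{t-w-1}$-measurable while $\ell_t$ is \emph{independent} of $\cF_{t-w-1}$, so Wald's identity applies exactly and yields $\hat{\I}_0\,\Exp_0^\theta[\T']$; the second (excess) sum is bounded above by $w\I_0$ using precisely the conditional inequality $\Exp_0^\theta[\ell_t\mid\cF_{t-1}]\leq\I_0$ from \eqref{eq:inequ}. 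This is the extension of Wald's identity to $w$-dependent sums (cf.\ the cited work of Moustakides), and it is what produces the clean $(\nu+w\I_0)/\hat{\I}_0$ term with exact constants. Your blanket dismissal of this route removes the central idea of the proof.

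The replacement you propose does not close the gap. The tail-sum bound $\Exp_0^\theta[\T'-w]\leq\sum_{n\geq0}\Pro_0^\theta[\U_{w+n}<\nu]$ is valid, but under the paper's assumptions only the second moment $\hat{\!\J}_0$ is available, so the natural deviation bound for each term is Chebyshev-type, $\Pro_0^\theta[\U_{w+n}<\nu]\lesssim n\,w\,\hat{\!\J}_0/(n\hat{\I}_0-\nu)^2$; summing this over $n$ beyond the crossing point diverges logarithmically, so the tail sum cannot even be closed without higher-moment or exponential-tail assumptions that the paper does not make. Even granting such assumptions, a blocking-plus-concentration argument would produce unspecified multiplicative constants, not the exact expression in \eqref{eq:th1}. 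Finally, your sketch has no counterpart to the second half of the paper's proof: a Lorden-style ladder-variable argument, adapted to $w$-dependence, which integrates the overshoot $\R_x=\U_{\T'_x}-x$ over $x\in[0,\nu]$ and yields $\Exp_0^\theta[\R_\nu]\leq\frac{\hat{\!\J}_0}{\hat{\I}_0}+\big(\frac{\hat{\!\J}_0}{\hat{\I}_0}\nu\big)^{\half}+\big(\frac{\hat{\!\J}_0}{\hat{\I}_0}\I_0w\big)^{\half}$. Those three terms are exactly the remaining summands in the numerator of \eqref{eq:th1}; without an overshoot bound of this precision the stated inequality is not recovered.
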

\begin{proof}
The complete proof of our claim involves several steps. We provide a proof sketch in three steps. First, from Lemma \ref{lem:3} we have that the worst-case average detection delay equals to $\Exp_0^\theta[\T]$, thus we only need to bound $\Exp_0^\theta[\T]$. Second, we apply the Wald's identity for $w$-dependent samples to bound the expected stopping time $\Exp_0^\theta[\T']$ for process $\{\U_t\}$ in \eqref{eq:statU}, this is an upper bound for the detection delay $\Exp_0^\theta[\T]$ based on Lemma \ref{lem:new1}. Finally, we substitute the threshold $\nu=\log\gamma$ according to Lemma \ref{lem:2} to ensure the ARL satisfies $\Exp_\infty[\T]\geq \gamma$.
Details can be found in the Appendix. 
\end{proof}

\begin{remark}\label{rem:3}
If we fix the window size $w$ then from Theorem\,\ref{th:1} we have that the upper bound of the worst-case average detection delay of WLCUSUM can be written as
$$
\Exp_0^\theta[\T]\leq\frac{\log\gamma}{\hat{\I}_0}\big(1+o(1)\big).
$$
Because from \eqref{eq:inequ} we have $\hat{\I}_0<\I_0$ this means that we cannot assure first-order asymptotic optimality with  fixed $w$.
From \eqref{eq:assA4} we see that the difference between the optimum denominator $\I_0$ (enjoyed by the exact CUSUM) and $\hat{\I}_0$ obtained by our proposed stopping time is (asymptotically) equal to $(1/w)\mathrm{trace}\{\Sigma_0\F_0\}$. As we argued in \eqref{eq:CRLB} the factor $\mathrm{trace}\{\Sigma_0\F_0\}$ due to the Cramer-Rao Lower Bound cannot become smaller than the length $K$ of the parameter vector, which corresponds to the best possible performance since it maximizes the denominator $\hat{\I}_0$. We recall that this optimal value is attained asymptotically by the MLE. %In conclusion, the MLE produces estimates that result in the best asymptotic performance for our detection scheme (of course, with respect to the upper bound).
\end{remark}

%\begin{remark}\label{rem:4}
In order for WLCUSUM to be first-order asymptotically optimum, it is necessary to increase $w$ with the average run length constraint $\gamma$ in a manner that guarantees that $w$ remains negligible compared to the optimum CUSUM performance but grows sufficiently fast so that it provides efficient estimates of $\theta$. Since the CUSUM detection delay increases as $\log\gamma$ (see Lemma\,\ref{lem:1}) this means that we must select $w=o(\log\gamma)$. At the same time, to obtain improved estimates, we must let $w\to\infty$ as $\gamma\to\infty$. These observations will be taken into account to decide the appropriate growth rate of $w$.
%\end{remark}

From Lemma\,\ref{lem:1} we deduce that the classical CUSUM stopping time $\Tc$ satisfies
$$
\Exp_0^\theta[\Tc]/\frac{\log\gamma}{\I_0}=1+\varTheta\left(\frac{1}{\log\gamma}\right).
$$
Since CUSUM is (strictly) optimum \cite{mous-astat-1986}, we have
$$
\Exp_0^\theta[\Tc]/\frac{\log\gamma}{\I_0}\leq\Exp_0^\theta[\T]/\frac{\log\gamma}{\I_0}.
$$
Using the results of Theorem\,\ref{th:1} and properly selecting the increase rate of $w$, in the next theorem, we demonstrate that WLCUSUM enjoys the desired first-order asymptotic optimality.

%\liyan{Shall we consider calling some lemmas as proposition, theorem, or corollaries, as there seems to be too much lemmas in the main text?}

\begin{theorem}[Asymptotic optimality of WLCUSUM]\label{lem:last}
% original Theorem statement
% If the window size $w$ satisfies $w=\varTheta\big((\log\gamma)^\alpha\big)$ with $0<\alpha<1$ then
% $$
% \Exp_0^\theta[\T]/\frac{\log\gamma}{\I_0}\leq1+\varTheta\left(\frac{1}{(\log\gamma)^\beta}\right),
% $$
% where $\beta=\min\{\alpha,1-\alpha,\half\}$. The maximal convergence speed to unity is achieved when $\beta$ assumes its maximum value $\beta=\half$, which occurs for $\alpha=\half$, i.e. $w=\varTheta\big(\sqrt{\log\gamma}\big)$.

% revision
If the window size $w$ satisfies $w\rightarrow\infty$ as $\gamma\rightarrow\infty$ with $w=o(\log\gamma)$, then
$$
\Exp_0^\theta[\T]/\frac{\log\gamma}{\I_0}\leq 1+\varTheta\left(\frac{1}{\sqrt{\log\gamma}}\right) + \varTheta\left(\frac{1}{w}\right) + \varTheta\left(\frac{w}{\log\gamma}\right).
$$
%Thus the WLCUSUM is first-order asymptotic optimal. 
The maximal convergence speed to unity is $1+\varTheta\left(\frac{1}{\sqrt{\log\gamma}}\right)$ and is achieved when $w=\varTheta\big(\sqrt{\log\gamma}\big)$.
\end{theorem}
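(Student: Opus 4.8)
The plan is to start from the explicit finite-$\gamma$ upper bound of Theorem~\ref{th:1}, substitute the Assumption~A4 expansions for $\hat{\I}_0$ and $\hat{\!\J}_0$, and then divide the whole expression by the benchmark $\log\gamma/\I_0$ so as to read off the asymptotic order of each summand. Writing $c_0=\mathrm{trace}\{\Sigma_0\F_0\}$, A4 gives $\hat{\I}_0=\I_0-\frac{c_0}{2w}\big(1+o(1)\big)$, so as $w\to\infty$ the prefactor becomes
$$
\frac{\I_0}{\hat{\I}_0}=\left(1-\frac{c_0}{2w\I_0}\big(1+o(1)\big)\right)^{-1}=1+\varTheta\!\left(\frac{1}{w}\right).
$$
At the same time $\hat{\!\J}_0\to\J_0$ by A4, so the ratio $\hat{\!\J}_0/\hat{\I}_0$ stays bounded and bounded away from zero for large $w$; I would treat it as a $\varTheta(1)$ constant inside the square-root terms. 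The condition $w\to\infty$ also guarantees $\hat{\I}_0\to\I_0>0$, so $\hat{\I}_0$ is eventually bounded away from $0$ and Theorem~\ref{th:1} indeed applies.

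The core of the argument is the asymptotic bookkeeping of the bracket in \eqref{eq:th1}. Dividing its five summands by $\log\gamma$ yields, respectively, the leading $1$, a $\varTheta(1/\log\gamma)$ term, a $\varTheta(1/\sqrt{\log\gamma})$ term (from $(\frac{\hat{\!\J}_0}{\hat{\I}_0}\log\gamma)^{1/2}$), a $\varTheta(w/\log\gamma)$ term (from $w\I_0$), and a $\varTheta(\sqrt{w}/\log\gamma)$ term. Multiplying by $\I_0/\hat{\I}_0=1+\varTheta(1/w)$ reproduces these same orders, plus the extra $\varTheta(1/w)$ from the prefactor and a collection of products of two error terms. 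The key step is verifying that, under the standing hypotheses, the three retained terms dominate all others: the $\varTheta(1/\log\gamma)$ contribution is absorbed into $\varTheta(1/\sqrt{\log\gamma})$; the assumption $w=o(\log\gamma)$ gives $\sqrt{w}=o(\sqrt{\log\gamma})$, hence $\varTheta(\sqrt{w}/\log\gamma)=o(1/\sqrt{\log\gamma})$ is likewise absorbed; and every cross product (for instance $\frac{1}{w}\cdot\frac{w}{\log\gamma}=\frac{1}{\log\gamma}$) is of strictly higher order than one of the retained terms. This produces exactly
$$
\Exp_0^\theta[\T]\Big/\frac{\log\gamma}{\I_0}\leq 1+\varTheta\!\left(\frac{1}{\sqrt{\log\gamma}}\right)+\varTheta\!\left(\frac{1}{w}\right)+\varTheta\!\left(\frac{w}{\log\gamma}\right).
$$

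For the optimal window I would note that the $\varTheta(1/\sqrt{\log\gamma})$ term is fixed, whereas the remaining pair $\frac{1}{w}+\frac{w}{\log\gamma}$ is precisely the situation governed by the arithmetic--geometric mean (AM--GM) inequality: $\frac{1}{w}+\frac{w}{\log\gamma}\geq\frac{2}{\sqrt{\log\gamma}}$, with equality iff $w^2=\log\gamma$, i.e.\ $w=\varTheta\big(\sqrt{\log\gamma}\big)$. At this choice both $w$-dependent terms collapse to $\varTheta(1/\sqrt{\log\gamma})$, so all three error terms share the same order and the bound reads $1+\varTheta(1/\sqrt{\log\gamma})$, the fastest attainable rate. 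One should also check admissibility, namely that $w=\varTheta(\sqrt{\log\gamma})$ satisfies both $w\to\infty$ and $w=o(\log\gamma)$, which is immediate.

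I do not expect a genuine obstacle at this stage, since the analytically demanding work---the Wald-type identity for $w$-dependent increments underlying Theorem~\ref{th:1}---has already been done. The only care required is the asymptotic accounting above: one must prevent the $(1+o(1))$ factors from A4 from contaminating the leading constants $\I_0,\J_0$, and must invoke $w\to\infty$ (for $\hat{\I}_0>0$ and boundedness away from $0$) together with $w=o(\log\gamma)$ (to discard the $\varTheta(\sqrt{w}/\log\gamma)$ contribution). Both hypotheses of the theorem are thereby used, and neither is superfluous.
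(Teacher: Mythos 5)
Your proposal is correct and follows essentially the same route as the paper: substitute the Assumption~A4 expansions into the bound of Theorem~\ref{th:1}, normalize by $\log\gamma/\I_0$, and identify the dominant error terms among $\varTheta(1/w)$, $\varTheta(1/\sqrt{\log\gamma})$, and $\varTheta(w/\log\gamma)$. Your explicit AM--GM step for locating the optimal window $w=\varTheta(\sqrt{\log\gamma})$ is a slightly more detailed justification of the balancing argument the paper states directly, but it is the same reasoning.
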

\begin{proof}
Using \eqref{eq:assA4} we note that the ratio $\hat{\!\J}_0/\hat{\I}_0$ in the numerator in \eqref{eq:th1} is a function of $w$, which for sufficiently large $w$ can be bounded by a constant that does not depend on $w$. If we select $w=o(\log\gamma)$ but $w\rightarrow\infty$ as $\gamma\rightarrow\infty$, replace it in the upper bound in \eqref{eq:th1} and also use \eqref{eq:assA4} for the denominator then we can write 
\begin{equation}
\Exp_0^\theta[\T]/\frac{\log\gamma}{\I_0}\leq\frac{1+\varTheta(\frac{1}{\log\gamma})+\varTheta(\frac{1}{\sqrt{\log\gamma}})+\varTheta(\frac{w}{\log\gamma})+\varTheta(\frac{w^{1/2}}{\log\gamma})}{1+\varTheta(\frac{1}{w})}.
\label{eq:mammamia}
\end{equation}
All $\varTheta(\cdot)$ terms in the right-hand side converge to 0 and the overall convergence rate is dominated by the slowest term among: $
\big\{\varTheta(\frac{1}{w}), \varTheta(\frac{1}{\sqrt{\log\gamma}}), \varTheta(\frac{w}{\log\gamma}) \big\}
$. Note that the best possible convergence rate towards 1 is obtained when we select $w=\varTheta\big(\sqrt{\log\gamma}\big)$. Therefore $w \sim \sqrt{\log\gamma}$ is the optimal choice but any window size $w$ satisfying $w\rightarrow\infty$ as $\gamma\rightarrow\infty$ with  $w=o(\log\gamma)$ is sufficient to guarantee the first-order optimality.
\end{proof}

The immediate consequence of Theorem\,\ref{lem:last} is the asymptotic optimality of WLCUSUM since
$$
\begin{aligned}
&1=\lim_{\gamma\to\infty}\Exp_0^\theta[\Tc]/\frac{\log\gamma}{\I_0}\leq\lim_{\gamma\to\infty}\Exp_0^\theta[\T]/\frac{\log\gamma}{\I_0} \\
& \leq\lim_{\gamma\to\infty}\left\{1+\varTheta\left(\frac{1}{\sqrt{\log\gamma}}\right) + \varTheta\left(\frac{1}{w}\right) + \varTheta\left(\frac{w}{\log\gamma}\right)\right\}=1,    
\end{aligned}
$$
when selecting $w=o(\log\gamma)$ with $w\rightarrow\infty$ as $\gamma\rightarrow\infty$, 
which proves that $\Exp_0^\theta[\T]/\frac{\log\gamma}{\I_0}$ attains the same nonzero limit as the optimum $\Exp_0^\theta[\Tc]/\frac{\log\gamma}{\I_0}$ and is therefore asymptotically optimum as well. As we have indicated, the optimal convergence rate towards 1 is of the form $\varTheta\big((\log\gamma)^{-\half}\big)$ using the optimum window size $w=\varTheta\big(\sqrt{\log\gamma}\big)$. Although there is a definite loss in performance as compared to the rate of the exact CUSUM which is $\varTheta\big((\log\gamma)^{-1}\big)$ (Lemma\,\ref{lem:1}), our scheme still enjoys asymptotic optimality.

\begin{remark}\label{rem:5.5}
One may claim that the specific rate we obtained is a consequence of the crude upper bound $O(\sqrt{\nu})$ we established for the average overshoot (see Appendix, proof of Theorem\,\ref{th:1}), and we could have obtained better results if this bound were a constant as in the i.i.d.~case \cite{lorden1970excess}. In fact a constant would have resulted in a term of the form $\varTheta(\frac{1}{\log\gamma})$ in place of the $\varTheta(\frac{1}{\sqrt{\log\gamma}})$ we have now in the numerator of \eqref{eq:mammamia}. For the computation of the convergence rate, this would have led to the smallest power satisfying $\beta=\min\{\alpha,1-\alpha\}$ instead of $\beta=\min\{\alpha,1-\alpha,\frac12\}$. However, if we maximize $\min\{\alpha,1-\alpha\}$ we still obtain that the best power rate is equal to $\frac12$ which is achieved when $\alpha=\frac12$ or equivalently $w=\varTheta(\sqrt{\log\gamma})$. %Consequently, we have not sacrificed any performance description accuracy by adopting this cruder estimate of the overshoot.
\end{remark}

\begin{remark}[Comparison with window-limited GLR]\label{rem:wGLR} In order for the window-limited GLR procedure in \eqref{glr_lai} to enjoy first-order asymptotic optimality, the window size $w$ must also grow with $\gamma$ in a certain rate. In particular $w$ must satisfy $\lim\inf w/\log\gamma > \I_0^{-1}$ and $\log w = o(\log\gamma)$ to ensure effective detection \cite{lai-ieeetit-1998}, namely window at least as large as the CUSUM average detection delay. In the proposed WLCUSUM procedure, we only require a window size $w=o(\log\gamma)$ with $w\rightarrow\infty$ as $\gamma\rightarrow\infty$, which can be much smaller, even negligible, than the requirement in the window-limited GLR. 
\end{remark}

In addition to the optimal order $w=\varTheta\big(\sqrt{\log\gamma}\big)$ obtained in Theorem\,\ref{lem:last}, we also provide a detailed characterization of the constant terms, which will be helpful for choosing the appropriate window size in practice. It should be mentioned that when the post-change quantities are unknown in practice, we can use the parallel variant in Section\,\ref{sec:variants} instead.

\begin{lemma}[Optimal window size]\label{lem:w_constant}
The optimal window size that minimizes the upper bound of the WADD of WLCUSUM given in \eqref{eq:th1} is 
$$
w=\frac{\sqrt{\mathrm{trace}\{\Sigma_0\F_0\}}}{\I_0\sqrt{2}}\sqrt{\log\gamma}\left\{1+\varTheta\left(\frac{1}{(\log\gamma)^{\frac{1}{4}}}\right)\\\right\}.
$$ 
%resulting in the following optimal upper bound
%$$
%\Exp_0^\theta[\T]/\frac{\log\gamma}{\I_0}
%\leq 
%1+ \left( \sqrt{2\mathrm{trace}\{\Sigma_0\F_0\}} + \left(\frac{\J_0}{\I_0}\right)^{\half} \right)\frac{1}{\sqrt{\log\gamma}} + \varTheta\left(\frac{1}{(\log\gamma)^{3/4}}\right).
%$$
\end{lemma}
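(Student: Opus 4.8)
The plan is to treat the right-hand side of \eqref{eq:th1} as an explicit function $B(w)$ of the window size and minimize it over $w$ for fixed, large $\gamma$. Writing $\nu=\log\gamma$ and $a=\mathrm{trace}\{\Sigma_0\F_0\}$ for brevity, I would first substitute the expansions of Assumption\,A4. Since $\hat{\!\J}_0/\hat{\I}_0=(\J_0/\I_0)\big(1+\varTheta(1/w)\big)$ stays bounded and bounded away from zero, I set $c=\J_0/\I_0$ and treat it as an effectively constant factor, keeping in mind that its residual $w$-dependence is only of relative order $1/w$. For the denominator I use $1/\hat{\I}_0=\I_0^{-1}\big(1+\tfrac{a}{2\I_0 w}+o(1/w)\big)$, which follows directly from the first line of \eqref{eq:assA4}.

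Next I would expand $B(w)$ and organize the resulting terms by their order of magnitude, anticipating that the minimizer lives at the scale $w=\varTheta(\sqrt{\nu})$ (so that $\sqrt{w}=\varTheta(\nu^{1/4})$). The only two $w$-dependent contributions of the dominant order $\varTheta(\sqrt{\nu})$ are the term $w$ (coming from $w\I_0$ divided by $\I_0$) and the cross term $\frac{a\nu}{2\I_0^2 w}$ (coming from $\nu/\I_0$ multiplied by the denominator correction $\frac{a}{2\I_0 w}$). Minimizing $w+\frac{a\nu}{2\I_0^2 w}$ is elementary and yields the leading-order optimizer $w_0=\frac{\sqrt{a}}{\I_0\sqrt{2}}\sqrt{\nu}$, which is exactly the claimed principal term.

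To capture the correction I would retain the next $w$-dependent term, namely $\sqrt{cw/\I_0}=\varTheta(\nu^{1/4})$, which originates from the summand $\big(\tfrac{\hat{\!\J}_0}{\hat{\I}_0}\I_0 w\big)^{1/2}$ in the numerator of \eqref{eq:th1}. Setting the derivative of $\Phi(w)=w+\frac{a\nu}{2\I_0^2 w}+\sqrt{cw/\I_0}$ to zero and perturbing about $w_0$ via $w=w_0(1+\delta)$, the identity $\frac{a\nu}{2\I_0^2 w_0^2}=1$ linearizes the optimality condition to $2\delta+\tfrac12\sqrt{c/(\I_0 w_0)}\approx0$, giving $\delta=-\tfrac14\sqrt{c/(\I_0 w_0)}=\varTheta(w_0^{-1/2})=\varTheta(\nu^{-1/4})$. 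This reproduces the stated multiplicative factor $1+\varTheta\big((\log\gamma)^{-1/4}\big)$.

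The main obstacle is bookkeeping rather than any single hard estimate: I must verify that every remaining $w$-dependent quantity is genuinely of smaller relative order than $\nu^{-1/4}$, so that none of them perturbs the leading correction. This includes the residual $\varTheta(1/w)$ dependence of $c$ through Assumption\,A4, the order-one cross terms such as $\frac{a\sqrt{c\nu}}{2\I_0^2 w}$ and the $\varTheta(\nu/w^2)$ contribution arising from the second-order expansion of $1/\hat{\I}_0$, and the $o(1/w)$ remainders hidden in \eqref{eq:assA4}; each contributes a shift in the optimizer of relative order $o(\nu^{-1/4})$ and is therefore absorbed into the stated $\varTheta$-term. I would also confirm that $\Phi''(w_0)=2/w_0>0$, so that $w_0$ together with its perturbation is a genuine minimizer rather than a saddle or maximum.
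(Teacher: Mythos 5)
Your proposal is correct and takes essentially the same route as the paper's proof: both substitute the Assumption\,A4 expansions into \eqref{eq:th1}, identify the dominant trade-off between the terms $w$ and $\mathrm{trace}\{\Sigma_0\F_0\}\log\gamma/(2\I_0^2 w)$ to obtain the leading constant $\sqrt{\mathrm{trace}\{\Sigma_0\F_0\}}/(\I_0\sqrt{2})$, and then perturb the first-order condition around $w_0=\varTheta(\sqrt{\log\gamma})$ so that the $\varTheta(\sqrt{w})$ overshoot term produces the $\varTheta\big((\log\gamma)^{-1/4}\big)$ correction. Your bookkeeping of the remaining lower-order terms is, if anything, slightly more explicit than the paper's, which simply absorbs them into $\varTheta(w^{1/2}/\log\gamma)$ and $\varTheta(1/w^2)$ remainders before differentiating.
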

\begin{proof}
Substitute \eqref{eq:assA4} for $\hat{\I}_0$ and $\hat{\!\J}_0$ in \eqref{eq:th1} and note that the upper bound in \eqref{eq:th1} can be written as 
\[
\frac{\log\gamma}{\I_0} \Bigg\{ 1 + \left(\frac{\J_0}{\I_0}\right)^{\half}\frac{1}{(\log\gamma)^{\half}}+\frac{\I_0}{\log\gamma}w  + \frac{\mathrm{trace}\{\Sigma_0\F_0\}}{\I_0}\frac{1}{2w}  +
\varTheta\left(
\frac{w^\half}{\log\gamma}
\right)+\varTheta\left(\frac{1}{w^2}\right)
\Bigg\}.    
\]
Given from Theorem\,\ref{lem:last} that the optimum $w$ we are seeking satisfies $w=\varTheta(\sqrt{\log\gamma})$, the previous expression can be further simplified as follows
\begin{align*}
&\frac{\log\gamma}{\I_0} \Bigg\{ 1 + \left(\frac{\J_0}{\I_0}\right)^{\half}\frac{1}{(\log\gamma)^{\half}}+\frac{\I_0}{\log\gamma}w  + \frac{\mathrm{trace}\{\Sigma_0\F_0\}}{\I_0}\frac{1}{2w}  +
w^\half\varTheta\left(\frac{1}{\log\gamma}
\right)\Bigg\}\\
& \leq \frac{\log\gamma}{\I_0} \Bigg\{ 1 + \left(\frac{\J_0}{\I_0}\right)^{\half}\frac{1}{(\log\gamma)^{\half}}+\frac{\I_0}{\log\gamma}w  + \frac{\mathrm{trace}\{\Sigma_0\F_0\}}{\I_0}\frac{1}{2w}+
\frac{w^\half D}{\log\gamma}
\Bigg\},
\end{align*}
for some constant $D$ coming from the definition of $\varTheta(\cdot)$. Minimizing the last expression over $w$ yields the following equation for the derivative
$$
\frac{\I_0}{\log\gamma} - \frac{\mathrm{trace}\{\Sigma_0\F_0\}}{\I_0}\frac{1}{2w^2}+
\frac{1}{2w^\half}\frac{D}{\log\gamma}
=0.
$$
Finding a more precise form for the optimum $w$ since we know its order of magnitude, amounts to selecting $w=C\sqrt{\log\gamma}\big(1+\varTheta\big((\log\gamma)^{-\rho}\big)\big),\rho>0$ and specifying $C$ and $\rho$. Substituting into the previous equation we obtain
\begin{align*}
&\frac{\I_0}{\log\gamma} - \frac{\mathrm{trace}\{\Sigma_0\F_0\}}{2\I_0  C^2\log\gamma}\left(1-\varTheta\left(\!\frac{1}{(\log\gamma)^{\rho}}\!\right)\right)+
\varTheta\left(\!\frac{1}{(\log\gamma)^{\frac{5}{4}}}
\!\right)\\
& =\frac{\I_0}{\log\gamma} - \frac{\mathrm{trace}\{\Sigma_0\F_0\}}{2\I_0 C^2\log\gamma}+
\varTheta\left(\!\frac{1}{(\log\gamma)^{\frac{5}{4}}}\!\right)-\varTheta\left(\!\frac{1}{(\log\gamma)^{1+\rho}}\!\right)=0,
\end{align*}
where for $|\epsilon|\ll1$ we used the approximation $1/(1+\epsilon)^2\approx1-2\epsilon$.
As we can see there are two terms which are of different order of magnitude therefore we need to make them 0 separately. This means
$$
\begin{aligned}
&\frac{\I_0}{\log\gamma} - \frac{\mathrm{trace}\{\Sigma_0\F_0\}}{\I_0}\frac{1}{2C^2\log\gamma}=0, \ \text{and } \ \varTheta\left(\frac{1}{(\log\gamma)^{\frac{5}{4}}}\right)-\varTheta\left(\frac{1}{(\log\gamma)^{1+\rho}}\right)=0.    
\end{aligned}
$$
The first results in $C=\sqrt{\mathrm{trace}\{\Sigma_0\F_0\}}/\I_0\sqrt{2}$ and the second in $\rho=\frac{1}{4}$. This completes the proof of the lemma.
\end{proof}

%\textcolor{red}{TO DO: remark on the influence of data dimension on the choice of window size, and further on the computational complexity, as pointed out by the reviewer.}

\section{Parallel WLCUSUM}\label{sec:variants}

Although the proposed WLCUSUM procedure in \eqref{eq:statACUSUM} and \eqref{eq:stACUSUM} enjoys first-order optimality, there are some practical concerns: (i) for any fixed $w$, the WLCUSUM procedure will start after observing $w$ samples, which leads to a delay at least $w$; (ii) the optimal window size derived in Lemma\,\ref{lem:w_constant} might be unknown when the post-change signal strength $\I_0$ is unknown; of course we can impose a pre-set lower bound on $\I_0$ and design the window size according to this lower bound, but this may lead to larger than necessary detection delays. In this section, we provide a variant to WLCUSUM that can resolve the above issue being also suitable for practical implementation. 

In case the optimal window size is difficult to estimate beforehand, we propose to perform \textit{parallel} WLCUSUMs with different window sizes. %, {\it with the smallest window size being $1$}. 
More specifically, we run in parallel multiple WLCUSUMs with window sizes ranging from 1 up to some maximal value $W$. For each window size $w$, we denote the corresponding test statistic with $\S_t(w)$. At each time instant $t$, all statistics are compared to a common threshold $\nu$ and the first time $\T_\P$ any of the statistics hits or exceeds $\nu$ is the time the parallel WLCUSUM will stop. If to each statistic we associate the corresponding stopping time $\T(w)$, then it is also true that the parallel WLCUSUM satisfies 
\begin{equation}
\T_\P =\min_{1\leq w\leq W}\{\T(w)\}.
\label{eq:TpvsTw}
\end{equation}
In adopting this approach, we do not wait for $W$ samples in order to start the detection procedure. Indeed the first WLCUSUM with $w=1$ has to wait for only one sample and then perform tests continuously. Furthermore, we do not need to specify an exact window size beforehand. In a sense by running in parallel multiple windows, it is mostly the best window that tends to be the first to stop.

The maximal window $W$ does not have to be different from what we estimated in our previous analysis. Also, the average detection delay at $\tau=0$ is still the worst case compared with any other change time $\tau>0$, i.e., the analysis in Lemma\,\ref{lem:3} can be modified to cover $\T_\P$ as well and we can easily demonstrate that the parallel WLCUSUM, with a suitable maximal window size $W$, is also first-order asymptotically optimal matching the performance of WLCUSUM with the optimal window. We provide the necessary proof of this claim in the next lemma and the discussion that follows.

\begin{lemma}\label{lem:PWLCUSUM}
The average run length of the Parallel WLCUSUM with maximum window size $W$ satisfies 
\[
\Exp_\infty[\T_\P]\geq \frac{1}{W} e^{\nu}.
\]
\end{lemma}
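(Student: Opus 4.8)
The plan is to reduce the claim to a single martingale argument, exactly as in the proof of Lemma~\ref{lem:2}, but applied simultaneously to all $W$ charts and then combined via a union bound. For each window size $w$, let $\L_t(w)$ denote the Shiryaev--Roberts-like statistic introduced in the proof of Lemma~\ref{lem:2}, defined by the recursion $\L_t(w)=(\L_{t-1}(w)+1)\f_0(\xi_t,\hth_{t-1})/\f_\infty(\xi_t)$ (with the window-$w$ estimate) and $\L_w(w)=0$, and extend it by $\L_t(w)=0$ for $t\leq w$. The two facts established there carry over verbatim to each chart: (i)~$\L_t(w)\geq e^{\S_t(w)}$ for $t>w$, and (ii)~since $\Exp_\infty[\L_t(w)\mid\cF_{t-1}]=\L_{t-1}(w)+1$ for $t>w$ while $\L_t(w)$ is frozen at $0$ for $t\leq w$, the compensated process $N_t(w):=\L_t(w)-(t-w)^+$ is a $\Pro_\infty$-martingale with $N_0(w)=0$; the $(t-w)^+$ compensator is what accounts for the staggered launch time of chart $w$.

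First I would aggregate these into one martingale $M_t=\sum_{w=1}^W N_t(w)$, which satisfies $\Exp_\infty[M_t]=M_0=0$. Applying optional sampling to the bounded stopping time $\T_\P\wedge n$ gives
\[
\Exp_\infty\!\left[\sum_{w=1}^W \L_{\T_\P\wedge n}(w)\right]=\Exp_\infty\!\left[\sum_{w=1}^W(\T_\P\wedge n-w)^+\right]\leq W\,\Exp_\infty[\T_\P\wedge n]\leq W\,\Exp_\infty[\T_\P],
\]
where the first inequality uses $(\T_\P\wedge n-w)^+\leq \T_\P\wedge n$ for every $w\geq1$.

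Next I would lower bound the left-hand side using the union structure $\T_\P=\min_{1\leq w\leq W}\T(w)$ from \eqref{eq:TpvsTw}. On the event $\{\T_\P\leq n\}$ at least one chart, say the triggering window $w^\ast$, satisfies $\S_{\T_\P}(w^\ast)\geq\nu$, so fact~(i) gives $\L_{\T_\P}(w^\ast)\geq e^\nu$; since every summand $\L_{\T_\P\wedge n}(w)\geq0$, the whole sum is at least $e^\nu\ind{\T_\P\leq n}$. Combining the two bounds yields $e^\nu\,\Pro_\infty(\T_\P\leq n)\leq W\,\Exp_\infty[\T_\P]$. Letting $n\to\infty$: either $\T_\P<\infty$ a.s.\ so that $\Pro_\infty(\T_\P\leq n)\to1$ and $e^\nu\leq W\,\Exp_\infty[\T_\P]$, or $\Exp_\infty[\T_\P]=\infty$ and the bound is trivial; in both cases $\Exp_\infty[\T_\P]\geq e^\nu/W$.

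The main obstacle is bookkeeping rather than analytic depth: one must correctly aggregate charts launched at staggered times (handled by the $(t-w)^+$ compensator, which makes each $N_t(w)$ a genuine martingale from $t=0$) and justify passing from $\T_\P\wedge n$ to $\T_\P$ through a monotone-convergence argument instead of a direct optional-sampling step, since $\T_\P$ need not have finite expectation a priori. The factor $1/W$ arises precisely from summing the unit drift contributed by each of the $W$ parallel compensators against a single threshold crossing worth $e^\nu$.
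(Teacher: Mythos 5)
Your proof is correct and follows essentially the same route as the paper's: per-window Shiryaev--Roberts-like statistics $\L_t(w)$ dominating $e^{\S_t(w)}$, the martingale/optional-sampling identity, and summing over the $W$ charts to extract the $1/W$ factor. Your only departures are refinements in rigor --- the $(t-w)^+$ compensator for the staggered launch times and the truncation at $\T_\P\wedge n$ followed by a limiting argument --- which tighten the paper's direct appeal to optional sampling (which implicitly assumes $\Exp_\infty[\T_\P]<\infty$) but do not change the underlying argument.
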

\begin{proof}
Recall that $\S_t(w)$ denotes the statistic of the WLCUSUM with window size $w$. Similarly to Lemma\,\ref{lem:2}, let us define a Shiryaev-Roberts like statistic $\{\L_t(w)\}$, under a window size $w$, through the recursion
$$
\L_t(w)=\big(\L_{t-1}(w)+1\big)\frac{\f_0(\xi_t,\hth_{t-1})}{\f_\infty(\xi_t)},~\L_w(w)=0, \ t \geq w+1.
$$
Following the arguments in Lemma\,\ref{lem:2}, we have $\L_t(w)\geq e^{\S_t(w)}$ for $t>w$ and $\{\L_t(w)\}$ preserves the characteristic martingale property with respect to the $\Pro_\infty$ measure. Applying Optional Sampling, we have 
$\Exp_\infty[\L_T(w)-T]=-w$ for any stopping time $T$ with finite expectation. The parallel \hbox{WLCUSUM} statistic can be written as $\max_{1\leq w\leq W}\{\S_t(w)\}$ with the corresponding stopping time satisfying $\T_\P=\inf\{t>0: \max_{1\leq w\leq W}\{\S_t(w)\} \geq \nu\}$. For the stopping time we have
$$
e^\nu \leq e^{\max\limits_{1\leq w\leq W}\{\S_{\T_\P}(w)\}}\leq \max_{1\leq w\leq W}\{\L_{\T_\P}(w)\}\leq \sum_{w=1}^W \L_{\T_\P}(w).
$$
Taking expectations under $\Pro_\infty$ on both sides yields
\[
e^\nu \leq \sum_{w=1}^W (\Exp_\infty[\T_\P]-w) 
%= W\Exp_\infty[\T_\P] - \frac{W(W+1)}{2}
< W \Exp_\infty[\T_\P],
\]
which proves the desired inequality.
\end{proof}

Equating the lower bound $\frac{1}{W}e^\nu$ provided by Lemma\,\ref{lem:PWLCUSUM} to the desired average false alarm period $\gamma>1$ assures that the false alarm constraint $\Exp_\infty[\T_\P]\geq\gamma$ is satisfied. Consequently, the threshold we select to use is equal to
\begin{equation}
\nu=\log(W\gamma)
\label{eq:gamma-P}
\end{equation}
as opposed to $\log\gamma$ we use for a single window.

%Such parallel approaches have been considered in previous work \cite{tartakovsky2005asymptotic,khan1995detecting,han2007detection,hadjiliadis2009one,geng2017multi}. 

For the average detection delay, note that the delay of the parallel WLCUSUM will certainly be no larger than the delay of each individual WLCUSUM procedure. Therefore, using \eqref{eq:TpvsTw} and Theorem\,\ref{th:1} we can write %for any window size $w=1,\ldots,W$,
\[
\begin{aligned}
& \Exp_0^\theta[\T_\P]  \leq \min_{1\leq w\leq W}\Exp_0^\theta[\T(w)] \leq \!\! \min_{1\leq w\leq W} \! \frac{\log (W\gamma) \! + \! \frac{\hat{\!\J}_0}{\hat{\I}_0} 
\! + \! \Big(\frac{\hat{\!\J}_0}{\hat{\I}_0}\log(W\gamma)\Big)^{\half}
\!+ \! w\I_0 \!+\! \Big(\frac{\hat{\!\J}_0}{\hat{\I}_0}\I_0w\Big)^{\half}}{\hat{\I}_0},    
\end{aligned}
\]
where $\hat{\!\J}_0,\hat{\I}_0$ are the estimated information numbers under window size $w$. Comparing the previous upper bound to the one obtained in \eqref{eq:th1} we can see that if  $\log W = o(\sqrt{\log\gamma})$ the two upper bounds are asymptotically equivalent for each $w$. This implies that the parallel WLCUSUM performs the optimization with respect to $w$ \textit{automatically} without the need of prior manual selection. Of course this is true as long as $W$ exceeds the value of the optimum window and at the same time conforms with the requirement $\log W = o(\sqrt{\log\gamma})$. Regarding the last constraint it is trivially satisfied by the window sizes we employ namely $W=\varTheta\big((\log\gamma)^\alpha\big)$. Indeed if we consider $W=\varTheta\big((\log\gamma)^\alpha\big)$ 
with $1>\alpha>\half$ then $\log W=\varTheta(\log\log\gamma)=o(\sqrt{\log\gamma})$ but also $W$ exceeds the optimum window size since the latter satisfies $\varTheta\big(\sqrt{\log\gamma}\big)$.

\begin{remark}%(Computational complexity)
\label{rem:computation}
The computational complexities %, in terms of the number of operations, needed, 
of WLCUSUM, parallel WLCUSUM, and window-limited GLR can be ordered as follows:
\[
\text{WLCUSUM}<\text{parallel WLCUSUM}<\text{window-limited GLR}.
\]
Although all three procedures with the appropriate choice of window size are first-order asymptotically optimal, they differ in the number of operations needed per update. To form a more precise idea about the requirements in computation, let us assume that the estimation process needs constant complexity per estimate when considering estimates of windows of consecutive sizes. This is because it can compute each estimate by updating the estimate of the previous size.
This is for example the case when computing arithmetic averages of samples of increasing windows since these estimates can be performed recursively. For the same estimation problem if we are interested only in a single window then the complexity of the estimator is proportional to the window size. Of course complexity can be significantly higher when estimates need to be obtained through iterative solutions. 

Let us start with WLCUSUM with window size $w$. Its complexity is $\varTheta(w)$ since we compute the estimate with complexity $\varTheta(w)$ and then update the corresponding statistic with a constant number of operations. The parallel WLCUSUM with maximal window $W$ will require $\varTheta(W)$ operations to compute all the estimates of \textit{consecutive} window sizes and $\varTheta(W)$ operations to update the corresponding statistics (constant number of operations per statistic), therefore the total complexity is still $\varTheta(W)$. In the case of the window-limited GLR if $w$ is the size of the adopted window then we need $\varTheta(w)$ to compute the estimates of the consecutive smaller window sizes. Then, each such estimate is applied to all the samples in the smaller window to form an approximation of the corresponding log-likelihood ratio. The total complexity required by these updates is $\varTheta(1)+\cdots+\varTheta(w)=\varTheta(w^2)$. The overall complexity for estimation and updates is $\varTheta(w)+\varTheta(w^2)=\varTheta(w^2)$.

For WLCUSUM if we consider the optimal window size $w=\varTheta(\sqrt{\log\gamma})$, this results in complexity $\varTheta(\sqrt{\log\gamma})$ per time update. For the parallel WLCUSUM with a maximal window size $W=\varTheta\big((\log\gamma)^\alpha\big)$ and $1>\alpha>\half$ to cover the optimal window size, we need complexity $\varTheta\big((\log\gamma)^\alpha\big)$. In the case of the window-limited GLR we know that $w$ must be at least of the same order as the detection delay therefore $w=\varTheta(\log\gamma)$, suggesting that the corresponding computational complexity is $\varTheta\big((\log\gamma)^2\big)$. As we can see the difference in number of operations between the two versions of WLCUSUM and the window-limited GLR is significant.
%WLCUSUM requires $\varTheta(\log\gamma)$, since we only need a single update/estimation for each time; Parallel WLCUSUM requires at most $\varTheta((\log\gamma)^{3/2})$ with $W\lesssim (\log\gamma)^{\half}$, which is sufficiently large to ensure good performance, since we need $W$ update/estimate steps for each time; Lai's window-limited GLR  \eqref{glr_lai} requires $\varTheta((\log\gamma)^{2})$ for with $w=\varTheta(\log\gamma)$. 
%%We note that in practice \eqref{glr_lai} tends to have a better performance, but suffers from high computational complexity.
%For GLR procedures, the choice of window size is essential and it cannot be smaller than the detection delay (see Remark\,\ref{rem:wGLR}), while the window size required for WLCUSUM is much smaller.
\end{remark}

\section{Experiments}\label{sec:numerical}

% In this section, numerical results are presented to compare the proposed WLCUSUM procedure with the CUSUM optimum performance and the generalized likelihood ratio (GLR) test. Then we study the performance of the WLCUSUM procedure under different window sizes to demonstrate the results that the optimal window size should increase with logARL at certain rate.

Simulation studies are performed to compare WLCUSUM and parallel WLCUSUM to the exact CUSUM and the window-limited GLR. In the exact CUSUM all parameters are considered known and the corresponding detection delay is optimum \cite{mous-astat-1986}. In all experiments, the average detection delay and average false alarm period are obtained using direct estimation and averaging over 1000 independent trials.

%Before we present the numerical results, it is worth clarifying the detailed simulation method we use to obtain the Lorden's {\it worst-case} average detection delay and emphasize the difference with other delay measures. Note that by Lemma\,\ref{lem:3}, the worst-case delay for WLCUSUM with window size $w$ corresponds to the expected detection delay when the changepoint equals 0 and the detection procedure starts at time $w+1$. This observation greatly facilitates the simulation task as we only need to simulate the average detection delay by setting the changepoint at 0. However, we must point out that WADD is different from other measures of detection delay, such as Pollak's measure \cite{poll-astat-1985}, which computes the conditional average detection delay (CADD) defined as the supreme of the expected delay over all possible changepoints while the delay is positive (i.e., it is not a false alarm). The difference in definition will reflect on the simulation process; for instance, the simulation of CADD can be much more complicated, and we have to simulate the conditional delay for many changepoint values and then take the maximum. We focus on the WADD measure, and thus we simulate the WADD for the parallel WLCUSUM and window-limited GLR as well, by noting that their worst-case average detection delay also equals the expected stopping time when the changepoint happens at 0. 

\subsection{Univariate Normal Mean-Shift}
We consider a normal mean-shift example. Let $\f_\infty(\cdot)$ be a canonical Gaussian pdf $\N(0,I_K)$ and $\f_0(\cdot,\theta)$ a Gaussian $\N(\theta,I_K)$. Since $\f_0(\cdot,\theta)$ with $\theta=0$ is equal to $\f_\infty(\cdot)$ we impose a constraint on $\theta$ in order to avoid the two densities becoming equal. For the mean $\theta$ we assume that $\theta\in\Theta=\{\theta:\left\Vert\theta\right\Vert\geq\vartheta>0\}$ where the \textit{barrier} value $\vartheta$ is considered known and can be interpreted as the smallest signal strength that we are interested in detecting. 
For our estimate of the mean we employ the MLE with the solution projected onto the allowable parameter set $\Theta$. Specifically
$$
\hth_t=\frac{\bar{\theta}_t}{\left\Vert \bar{\theta}_t \right\Vert}\max\{\left\Vert\bar{\theta}_t\right\Vert,\vartheta\},~~\text{where}~\bar{\theta}_t=\frac{1}{w}\sum_{s=0}^{w-1}\xi_{t-s}.
$$

We first present a suite of numerical results for the one-dimensional ($K=1$) case with varying signal strengths. In the first setting, we set $\theta = 1$ and $\vartheta=0.5$.  Figure\,\ref{fig:edd1_final}(a) depicts the worst-case expected detection delay versus the average false alarm period (ARL), where the black line corresponds to the exact CUSUM procedure which lies below all the other curves. 
\begin{figure}[ht!]
\begin{tabular}{cc}
\includegraphics[width = 0.45\linewidth]{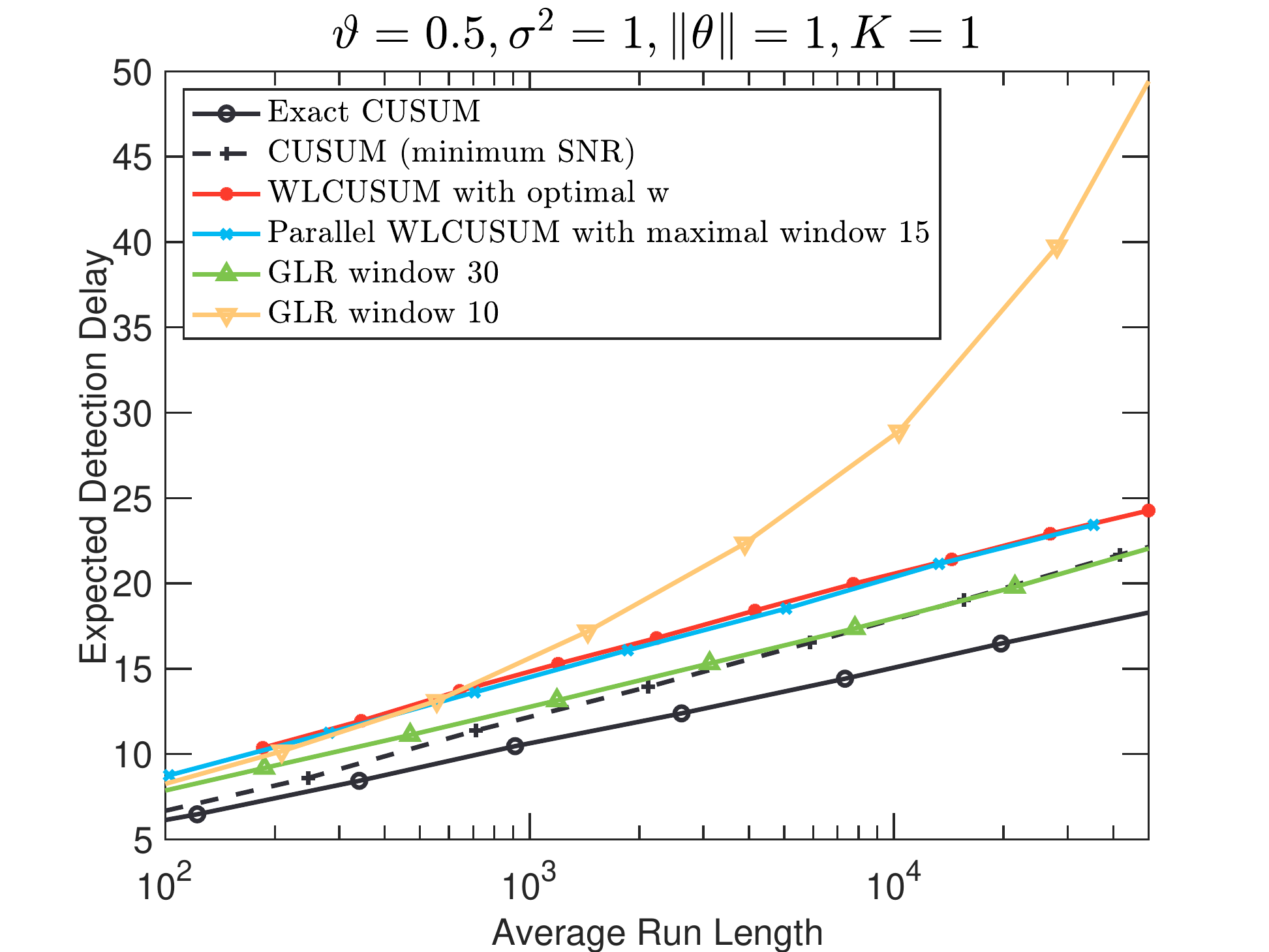} & \includegraphics[width = 0.45\linewidth]{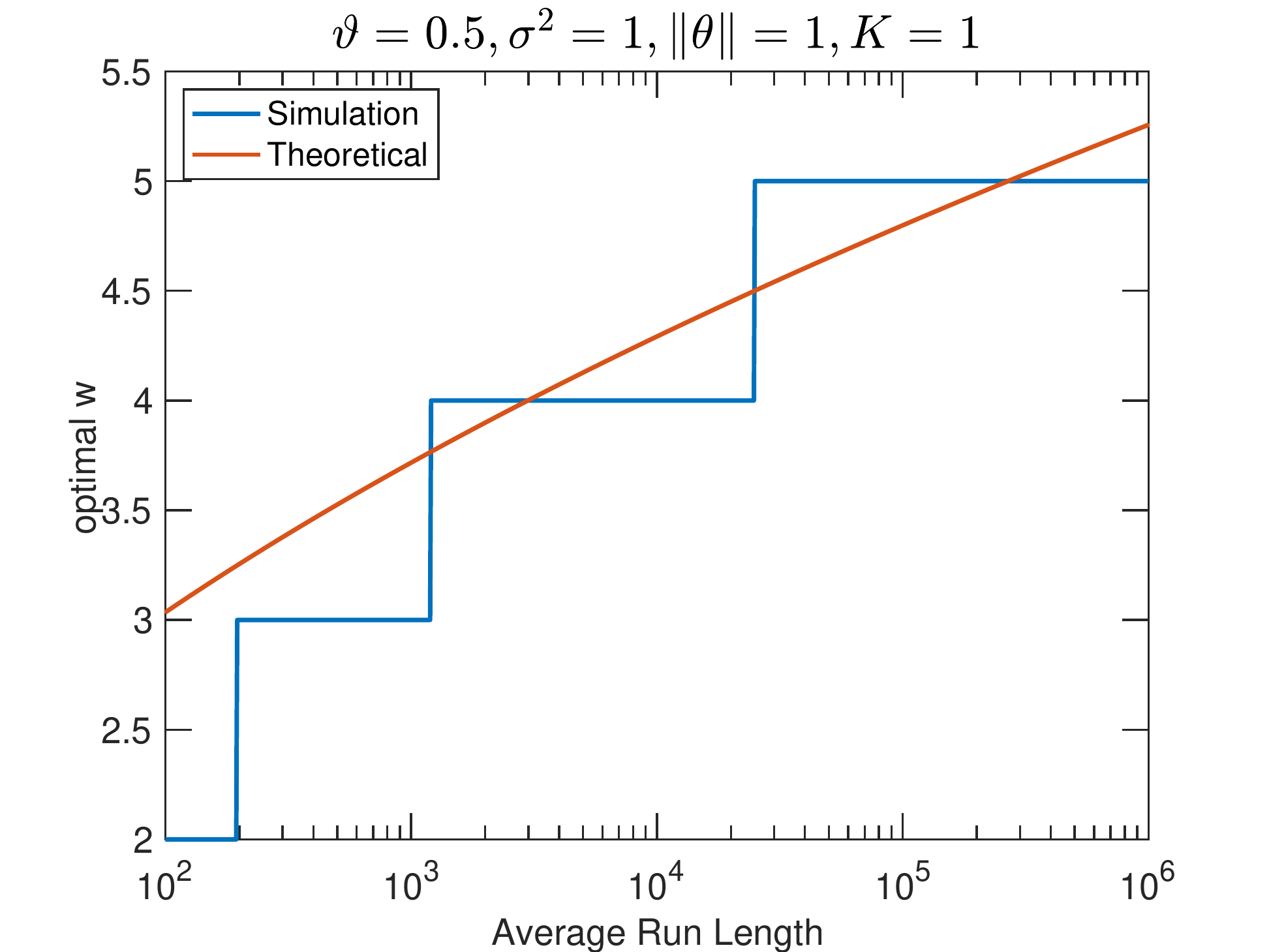}\\
(a) & (b) \\ 
\end{tabular}
\caption{(a) Detection delays for univariate normal mean-shift from 0 to $\theta=1$, barrier $\vartheta=0.5$; (b) Corresponding optimal window size as a function of ARL for WLCUSUM.}
\label{fig:edd1_final}
\end{figure}
The red line corresponds to \hbox{WLCUSUM} with optimal window size with Figure\,\ref{fig:edd1_final}(b) depicting how the optimal window must change as a function of ARL (theoretical and simulations). In Figure\,\ref{fig:edd1_final}(a) with blue we can also see the  parallel WLCUSUM procedure with maximal window size $W=15$. For comparison we also present in green the window-limited GLR defined in \eqref{glr_lai} with a window size of 30 (chosen to be larger than $\log\text{ARL}/\I_0$ as explained in Remark\,\ref{rem:wGLR}) and in yellow with an insufficient window size 10. In the latter we notice the remarkable performance degradation if we do not use the appropriate window. Regarding our proposed methods we observe that the parallel WLCUSUM has nearly similar performance as the WLCUSUM with optimal window size, without requiring any prior specification of the optimal window. The window-limited GLR (with sufficient window size), in this scenario exhibits a smaller detection delay than both versions of WLCUSUM but at the expense of a higher computational cost as explained in Remark\,\ref{rem:computation}.
Finally, we compare all procedures with the CUSUM test under minimal signal strength in black dashed line, i.e., we calculate the CUSUM test using densities of $\N(0.5,1)$ and $\N(0,1)$.
%However we must emphasize that in practice if we do not know the direction of $\theta$ in advance, then the CUSUM under minimum strength cannot be performed easily.}
%, and they are both first-order asymptotically optimal.

For the optimal window size depicted in Figure\,\ref{fig:edd1_final}(b) we simulated sizes varying from 1 to 15 and for each ARL value we selected the size with the smallest detection delay. This is depicted by the blue curve.
With red we have plotted the theoretical optimal value obtained in Lemma\,\ref{lem:w_constant}. We observe a relatively good agreement between the two possibilities for large ARL values which is to be expected since our theoretical results are asymptotic.
%
% (which translates to $\sqrt{2K\log\gamma}/||\theta||^2$ in this scenario), as shown in Figure\,\ref{fig:edd1}. The closeness between the optimal $w$ from simulation and theory validates the results in Lemma\,\ref{lem:w_constant}.

%\begin{figure}[!ht]
%\centering
%\includegraphics[width = 0.6\textwidth]{Figures/fig2.pdf}
%\caption{Optimal window size of WLCUSUM for univariate normal mean-shift $\N(0,1)$ to $\N(1,1)$, $\vartheta=0.5$.}
%\vspace{-0.2in}
%\label{fig:edd1}
%\end{figure}

\begin{figure}[ht!]
\centering
\begin{tabular}{cc}
\includegraphics[width = 0.45\linewidth]{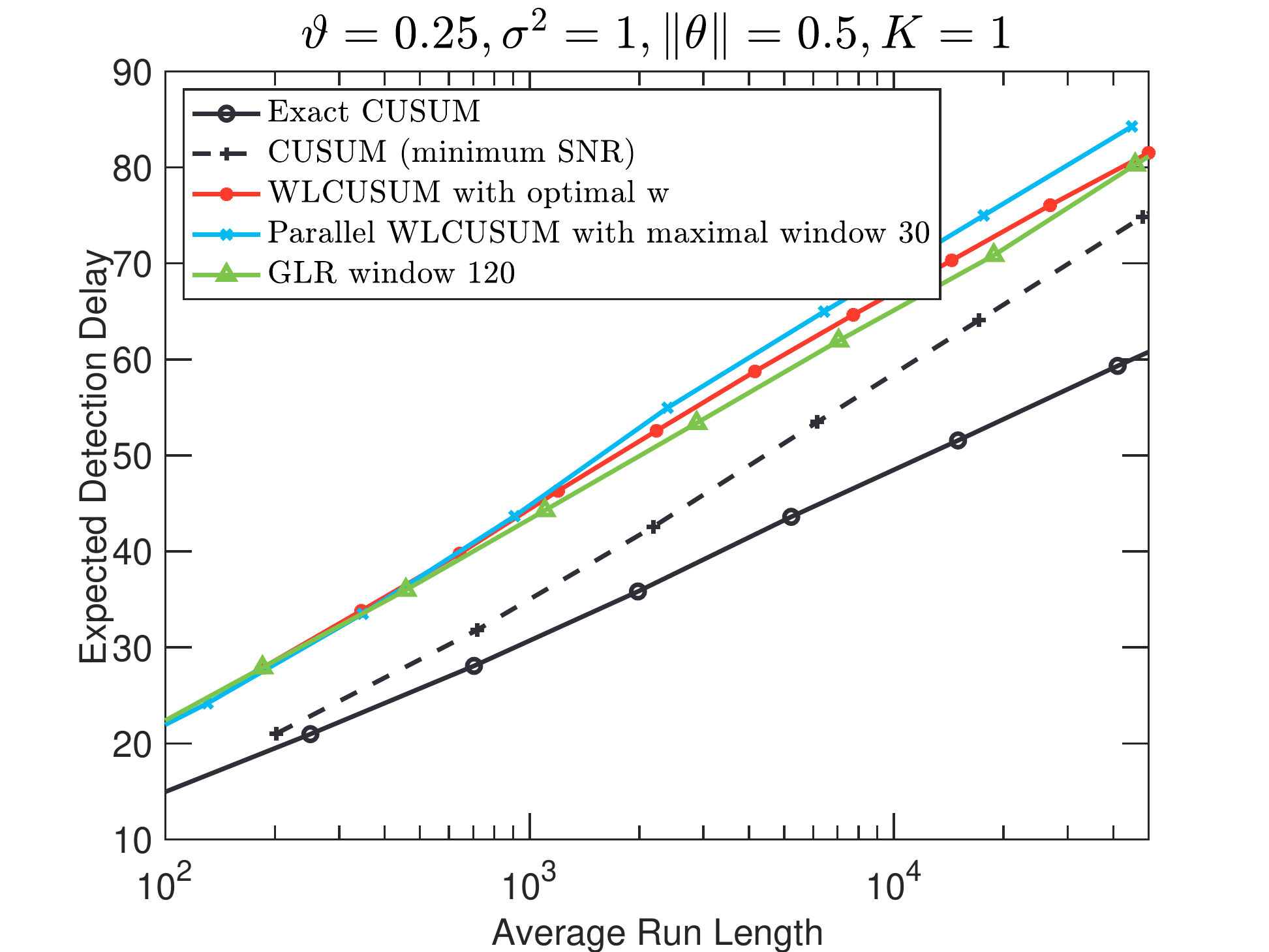} & \includegraphics[width = 0.45\linewidth]{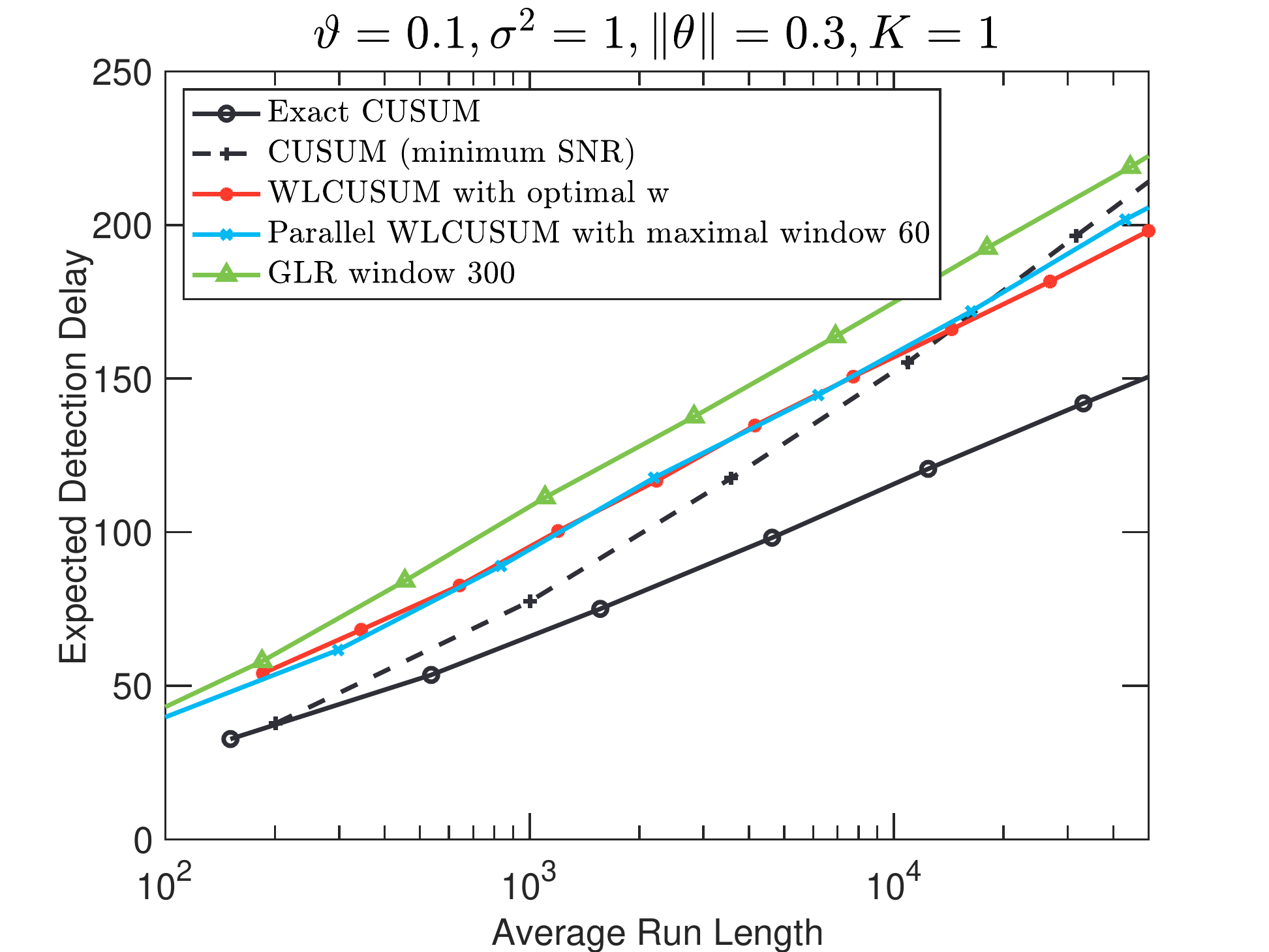}\\
(a) & (b) \\
\end{tabular}
\caption{Detection delays of different methods for univariate normal mean-shift with (a)~$\theta=0.5$ and (b)~$\theta=0.3$.}
%\vspace{-0.15in}
\label{fig:fig2}
\end{figure}
In Figure\,\ref{fig:fig2}(a) and (b) we performed similar experiments but under more difficult detection scenarios.
In particular in (a) we consider the change in the mean from 0 to $\theta=0.5$ with a barrier value of $\vartheta=0.25$ while in (b) from 0 to $\theta=0.3$ with barrier $\vartheta=0.1$. Again both versions of WLCUSUM have comparable performance however now the window-limited GLR, despite its high complexity as a result of the corresponding large windows, does not enjoy better performance than WLCUSUM. In fact in the second more challenging detection example it is clearly worse than WLCUSUM. In addition, we see that the CUSUM under minimum signal strength also becomes much worse as ARL becomes large and is worse than WLCUSUM in the second more challenging detection example.

%\newpage
\subsection{Varying Dimensions}

To illustrate the performance on multivariate distributions, we performed simulations with varying parameter dimensions. As shown in \eqref{eq:CRLB}, the dimension of $\theta$ will affect the appropriate choice of window size as well as the quantities $\hat{\I}_0$, $\hat{\I}_\infty$, and $\hat{\!\J}_0$. We consider two dimensions: $K=5$ and $K=10$. In both cases we simulate two different mean changes (i)~from 0 to $\theta$ with $\left\Vert \theta\right\Vert = 1$ and barrier $\vartheta=0.5$, and (ii) from 0 to $\theta$ with $\left\Vert\theta\right\Vert=0.3$ and $\vartheta=0.1$. The appropriate range of window sizes is chosen based on Lemma\,\ref{lem:w_constant}.
%we choose an appropriate range of window sizes, which is 1 to 20 for $K=5$ and 1 to 25 for $K=10$.  

Figure\,\ref{fig:fig4}(a) and (b) depicts the detection delay of the competing schemes under a large change namely from 0 to $\theta$ with $\|\theta\|=1$ and barrier value $\vartheta=0.5$ for the two lengths $K=5,10$. We can see in Figure\,\ref{fig:fig4}(a) and (b) that in both cases the window-limited GLR exhibits a smaller detection delay compared to WLCUSUM but, as we explained, at the expense of a significantly higher computational complexity. However this advantage tends to disappear when the detection problem becomes more challenging as in the case of the small change in the mean from 0 to $\|\theta\|=0.3$ with barrier $\vartheta=0.1$. This is clearly depicted in Figure\,\ref{fig:fig4}(c) and (d) where WLCUSUM is antagonistic to GLR for both lengths $K=5,10$. Of course we must not forget that this performance of WLCUSUM is enjoyed at a significantly lower computational complexity level as compared to the window-limited GLR.
\begin{figure*}[!t]
\centering
\begin{tabular}{cc}
\includegraphics[width = 0.45\textwidth]{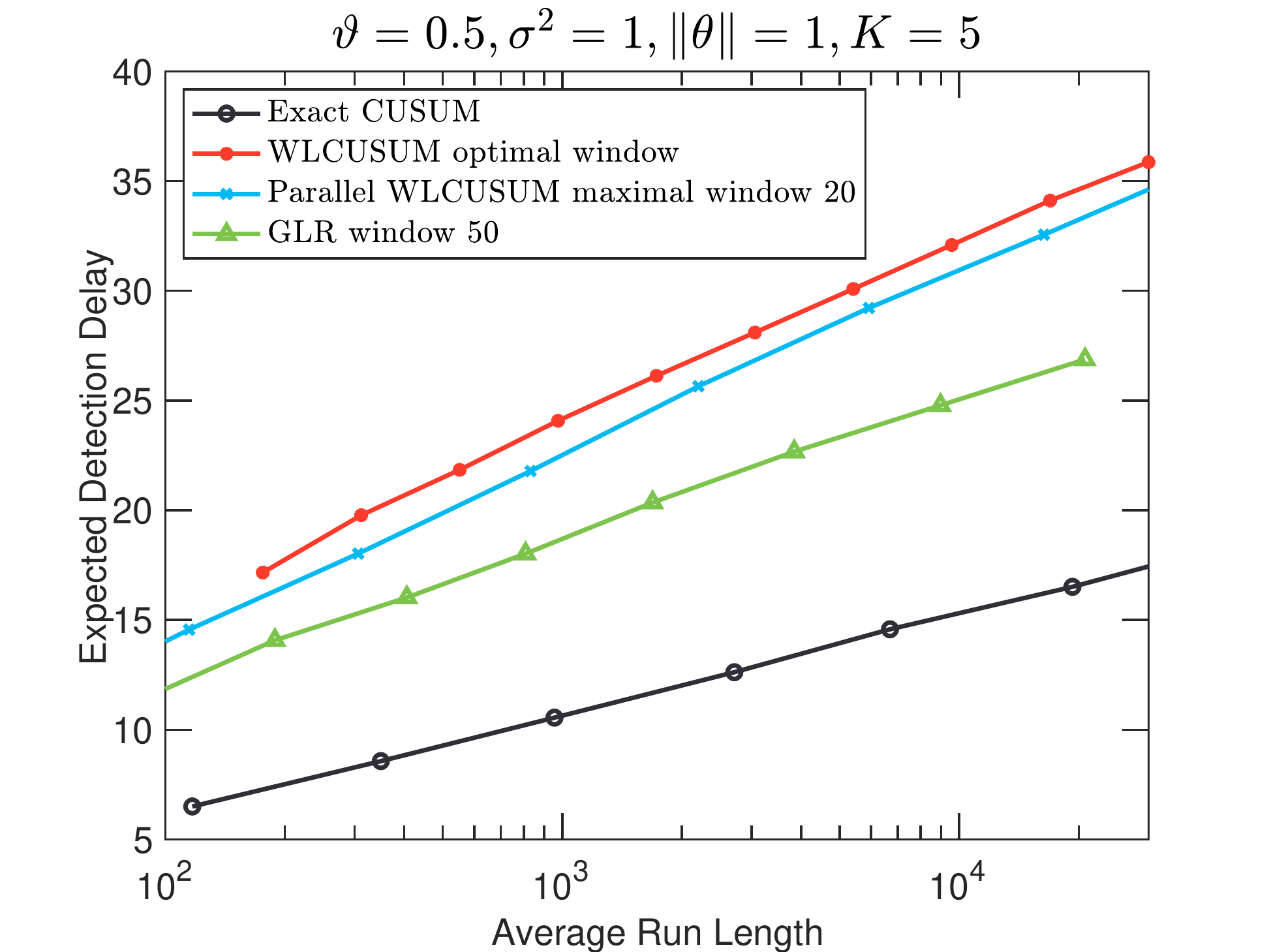} & \includegraphics[width = 0.45\textwidth]{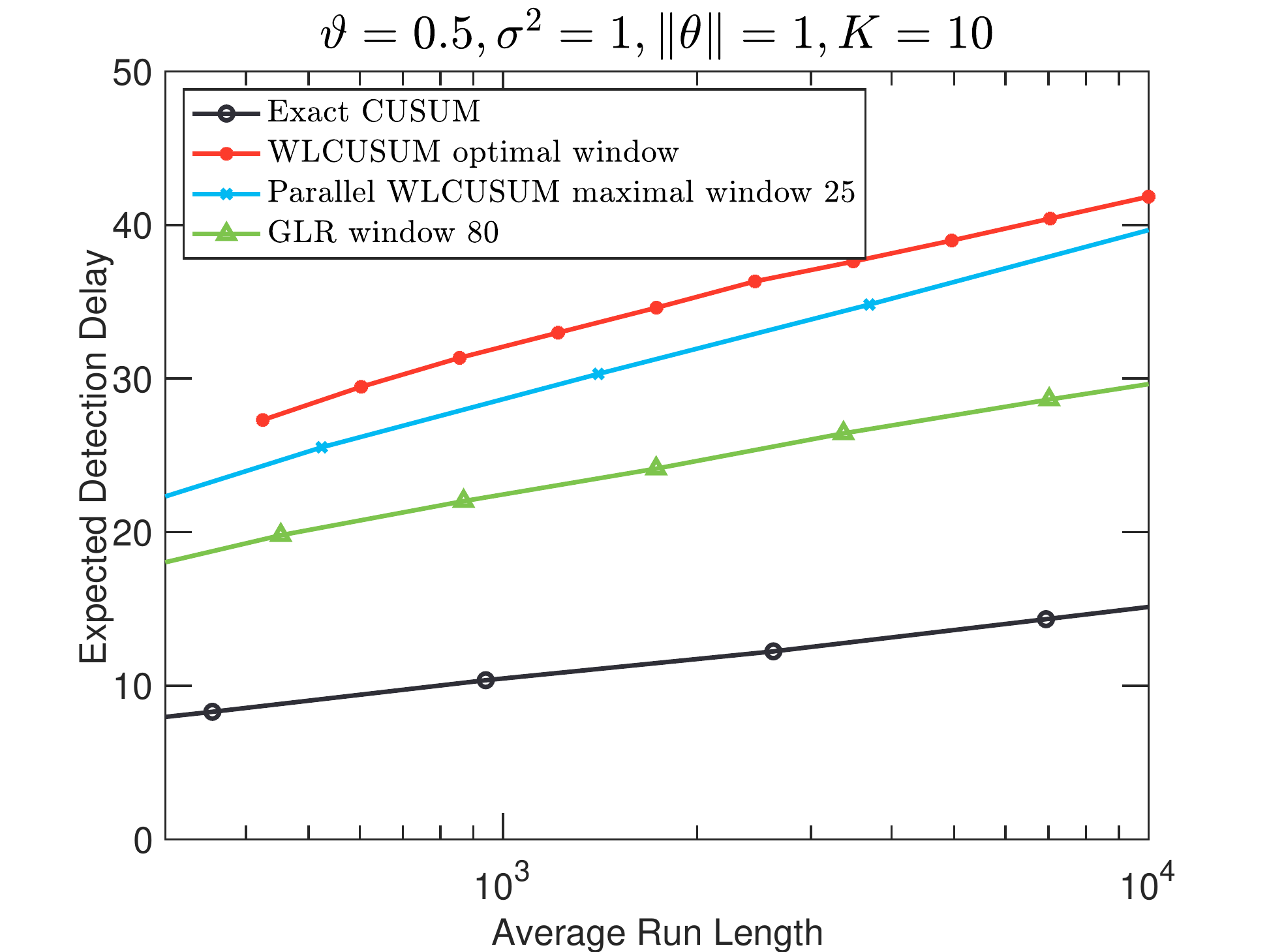} \\
(a)& (b)
\end{tabular}
\vskip0.3cm
%\caption{Detection delay for multivariate normal mean-shift from 0 to $\theta$ with $\|\theta\|=1$ and barrier $\vartheta=0.5$ with parameter vector of length (a)~$K=5$ and (b)~$K=10$.}
%\label{fig:fig3}
%\end{figure}
%
%
%
%\begin{figure}[!t]
\centering
\begin{tabular}{cc}
\includegraphics[width = 0.45\textwidth]{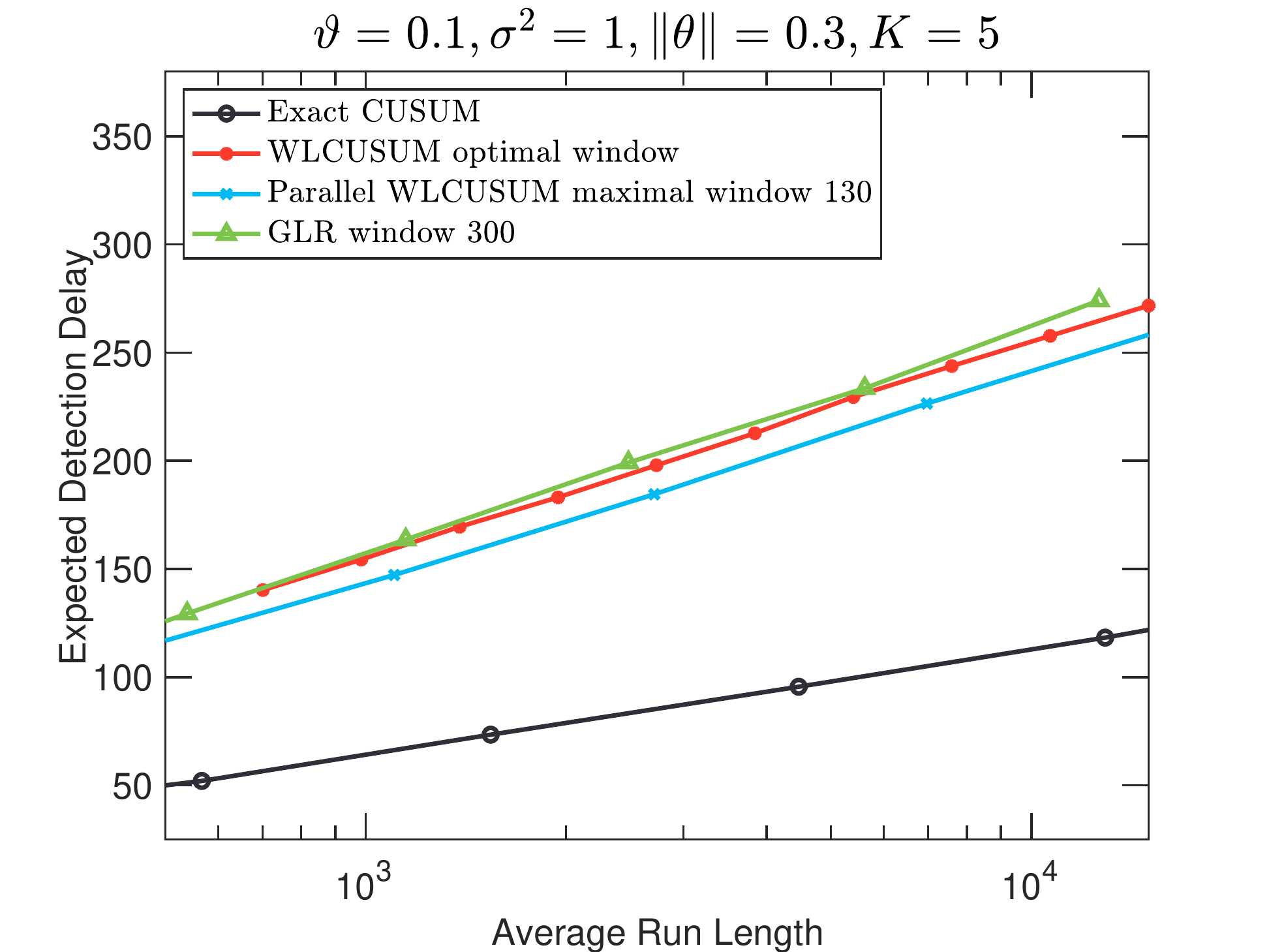} & \includegraphics[width = 0.45\textwidth]{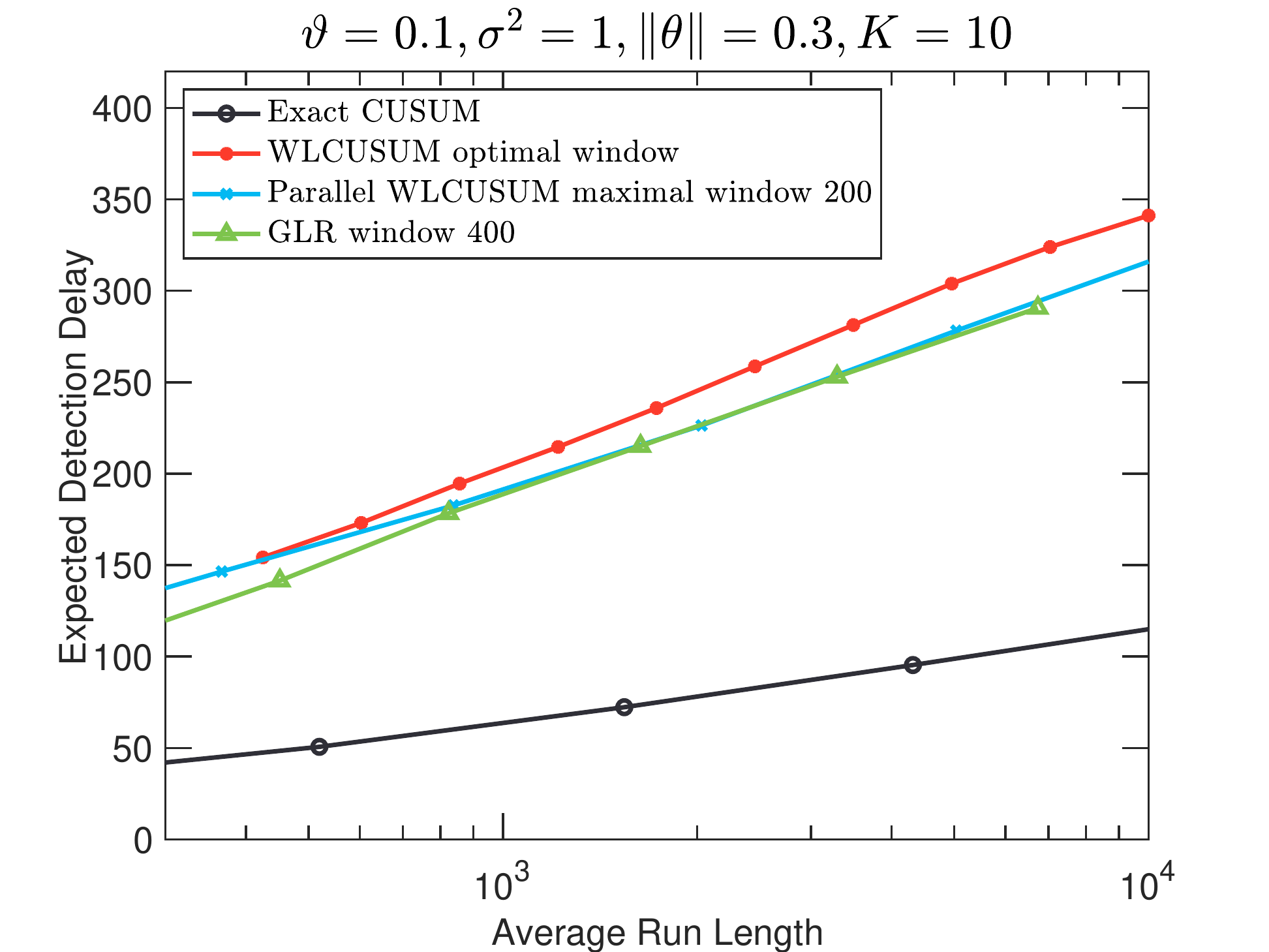} \\
(c)& (d)
\end{tabular}
\caption{Detection delay for multivariate normal mean-shift from 0 to $\theta$ with (a)~$\|\theta\|=1,~\vartheta=0.5,~K=5$; (b)~$\|\theta\|=1,~\vartheta=0.5,~K=10$;  (c)~$\|\theta\|=0.3,~\vartheta=0.1,~K=5$; (d)~$\|\theta\|=0.3,~\vartheta=0.1,~K=10$.}
%
%$\|\theta\|=0.3$ and barrier $\vartheta=0.1$ with parameter vector of length (a)~$K=5$ and (b)~$K=10$.}
\label{fig:fig4}
\end{figure*}

\subsection{Different Parametric Families}

We also consider the case with non-Gaussian pre-change distribution and Gaussian post-change distributions, i.e., the pre- and post-change does not belong to the same parametric family. In this case, the post-change parameter set $\Theta$ can be set as large as the entire space $\Real^K$. We assume the pre-change distribution is a Laplace distribution with mean zero and variance 1, i.e., the density function $f_\infty = \frac{1}{\sqrt{2}}\exp\{-\sqrt{2}|x|\}$; and the post-change distribution is a normal distribution $\N(\mu,\sigma^2)$. Therefore, the pre-change distribution does not belong to the post-change distribution family for any $\mu$ and $\sigma$, and we can safely choose the parameter set $\Theta$ as the entire parameter space.

In Figure\,\ref{fig:fig5}(a) and (b) we performed experiments under two different detection scenarios. In particular in (a) we consider the post-change distribution being $\mathcal N(0,1)$, i.e., although the distribution shifts from Laplace to Normal, their mean and variance remain the same. We assume the post-change variance $\sigma^2$ is known and only estimate the mean parameter during the detection procedure. Furthermore, in (b) we consider the post-change distribution being $\mathcal N(0,4)$ and assume $\sigma^2$ is unknown, so we have to estimate the post-change mean and variance jointly through maximum likelihood estimate. In such case, we have the parameter space $\Theta=\{-\infty<\mu<\infty,\sigma^2>0\}$. Again both versions of WLCUSUM have comparable performance. The window-limited GLR, despite its high complexity as a result
of the corresponding large windows (especially for (a) where the change is challenging to detect), does not enjoy better performance than WLCUSUM.

\begin{figure}[ht!]
\centering
\begin{tabular}{cc}
\includegraphics[width = 0.45\linewidth]{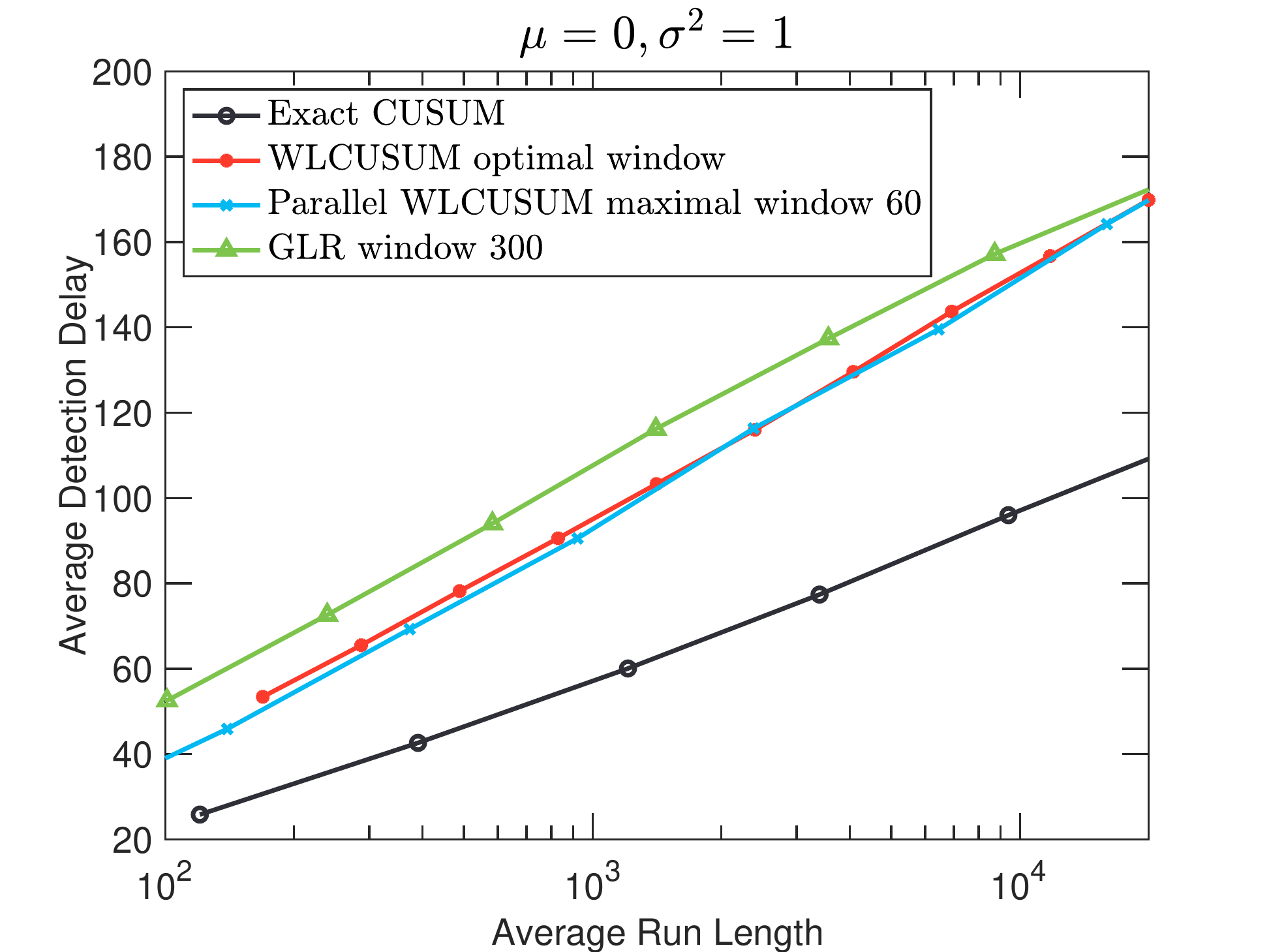} & \includegraphics[width = 0.45\linewidth]{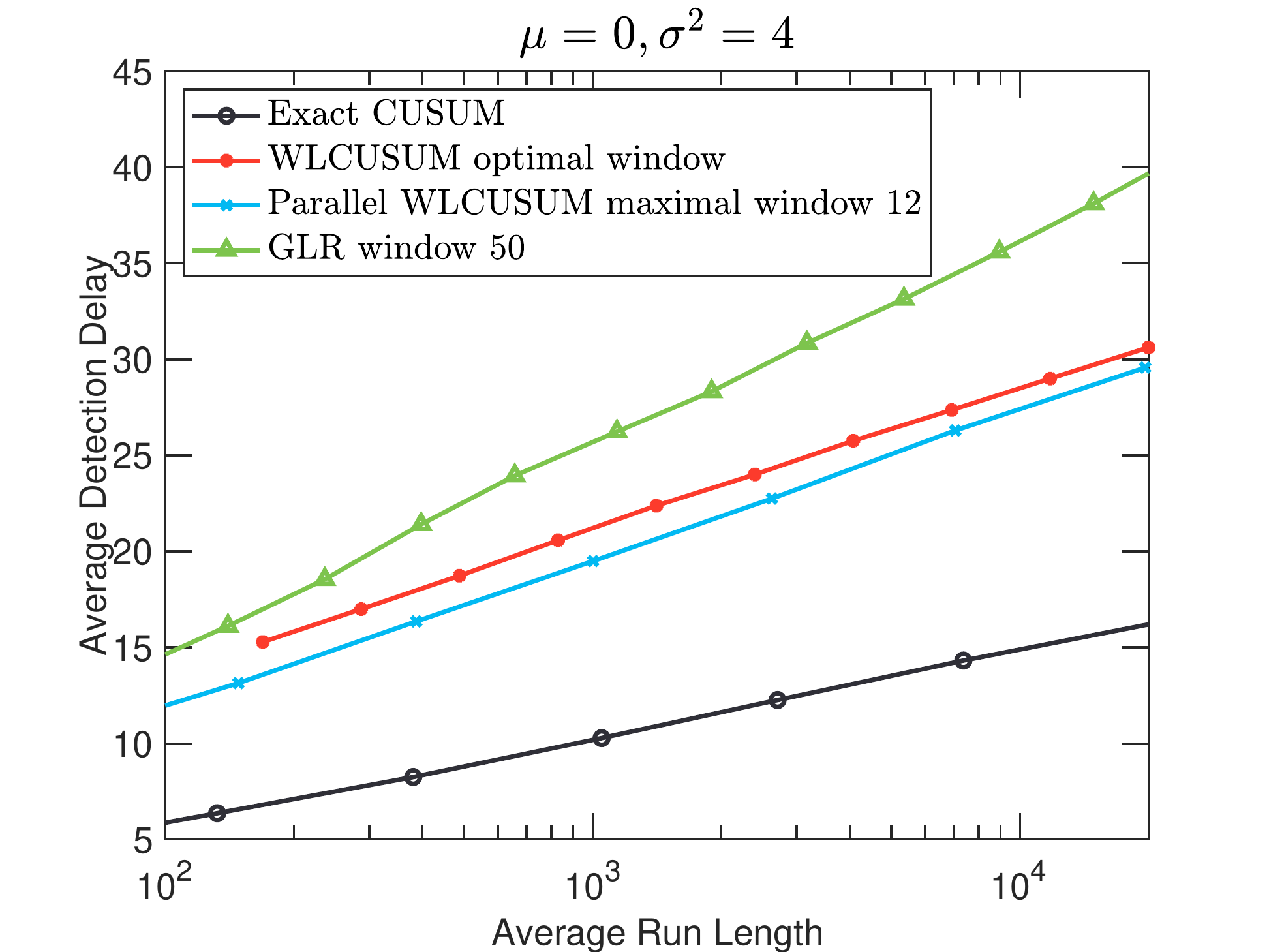}\\
(a) & (b)\\
\end{tabular}
\caption{Detection delays of different methods for univariate shift from Laplace to Normal distributions with (a) $\mu=0$, $\sigma^2=1$ and assume known $\sigma^2$; (b) $\mu=0$, $\sigma^2=4$ and assume unknown $\sigma^2$.}
\label{fig:fig5}
\end{figure}

\section{Conclusion}
In this work we consider the sequential change detection problem with known pre-change distribution and unknown post-change distribution but in certain parametric forms. We propose a window limited CUSUM procedure that uses a sliding window to perform online estimate for the unknown post-change parameter. A careful analysis on the average run length and detection delay shows the asymptotic optimality of the proposed method, with a window size much smaller than that required by window-limited GLR approach. The proposed framework also opens opportunities for several future research directions. For example, we may consider more efficient (such as ``one-sample update'' via stochastic gradient descent) estimate of the post-change parameter. Moreover, we may also try to relax the current constraint that the pre-change distribution must be a known distance away from the post-change distribution sets. The extension to joint detection and estimation is also worthwhile investigating in the future.

\appendix

\begin{proof}[Proof of Theorem\,\ref{th:1}]
Recall the definition of the process $\{\U_t\}$ is $
\U_t=\U_{t-1}+\log\frac{\f_0(\xi_t,\hth_{t-1})}{\f_\infty(\xi_t)}$ with $\U_w=0$, and the corresponding stopping time $\T'=\inf\{t>w:\U_t\geq\nu\}$. For the expectation $\Exp_0^\theta[\U_{\T'}]$ we cannot apply the usual form of Wald's identity because the terms under the sum are $w$-dependent. Indeed, while $\log\frac{\f_0(\xi_t,\theta)}{\f_\infty(\xi_t)}$ is independent from $\cF_{t-1}$, when we replace the unknown $\theta$ with the estimate $\hth_{t-1}$ then
$\log\frac{\f_0(\xi_t,\hth_{t-1})}{\f_\infty(\xi_t)}$ depends on $\cF_{t-1}$ and is independent only from $\cF_{t-w-1}$ due to the $w$-dependency of the estimates $\{\hth_t\}$. There exists version of Wald's identity for dependent samples \cite{moustakides1999extension} and for our analysis we are going to borrow ideas from this work but we intend to present all the details for our particular case.

For simplicity let us denote with $\ell_t=\log\frac{\f_0(\xi_t,\hth_{t-1})}{\f_\infty(\xi_t)}$, then we observe that for $t>w$ we can write
\begin{equation}
\begin{aligned}
\Exp_0^\theta[\U_{\T'}] =\Exp_0^\theta\left[\sum_{t=w+1}^{\T'}\ell_t\right] =\Exp_0^\theta\left[\sum_{t=w+1}^{\T'+w}\ell_t\right]
-\Exp_0^\theta\left[\sum_{t=\T'+1}^{\T'+w}\ell_t\right].    
\end{aligned}
\label{eq:AA1}
\end{equation}
Consider the two terms in \eqref{eq:AA1} separately. For the first we have
\begin{align*} &\Exp_0^\theta\left[\sum_{t=w+1}^{\T'+w}\ell_t\right]
=\Exp_0^\theta\left[\sum_{t=w+1}^{\infty}\ell_t\ind{\T'\geq t-w}
\right]=\Exp_0^\theta\left[\sum_{t=w+1}^{\infty}\Exp_0^\theta\left[\ell_t|\cF_{t-w-1}\right]\ind{\T'\geq t-w}\right]\\
&=\Exp_0^\theta\left[\sum_{t=w+1}^{\infty}\Exp_0^\theta\left[\ell_t\right]\ind{\T'\geq t-w}\right] =\Exp_0^\theta\left[\ell_{w+1}\right]\Exp_0^\theta[\T']
=\hat{\I}_0\Exp_0^\theta[\T'],
\end{align*}
with the second equality being true since $\ind{\T'\geq t-w}$ is $\cF_{t-w-1}$-measurable and the third equality being valid because $\ell_t=\log\frac{\f_0(\xi_t,\hth_{t-1})}{\f_\infty(\xi_t)}$ is independent from $\cF_{t-w-1}$.

Consider now the second term in \eqref{eq:AA1}, we observe that
\begin{equation}
\begin{aligned}
&\Exp_0^\theta \left[\sum_{t=\T'+1}^{\T'+w}\ell_t\right]=\Exp_0^\theta\!\left[\sum_{t=w+1}^{\infty}\ell_t\ind{t>\T'}\ind{\T'\geq t-w}
\right] \\
&=\Exp_0^\theta\left[\sum_{t=w+1}^{\infty}\Exp_0^\theta\left[\ell_t|\cF_{t-1}\right]\ind{t>\T'}\ind{\T'\geq t-w}
\right]\\
&\leq\Exp_0^\theta\left[\sum_{t=w+1}^{\infty}\I_0\ind{t>\T'}\ind{\T'\geq t-w}
\right]= w\I_0,    
\end{aligned}
\label{eq:mbifla1}
\end{equation}
where for the inequality we used \eqref{eq:inequ}. Combining the two expressions we conclude that
\begin{equation}
\begin{aligned}
& \hat{\I}_0\Exp_0^\theta[\T']-w\I_0\leq\Exp_0^\theta[\U_{\T'}] 
 \Rightarrow\Exp_0^\theta[\T']\leq\frac{\Exp_0^\theta[\U_{\T'}]+w\I_0}{\hat{\I}_0}=
\frac{\Exp_0^\theta[\U_{\T'}-\nu]+\nu+w\I_0}{\hat{\I}_0}.    
\end{aligned}
\label{eq:AAA1}
\end{equation}

The next step involves the control of the expectation of the overshoot $\R_\nu=\U_{\T'}-\nu$. In the case where $\U_t$ is a sum of i.i.d.~terms, we have from \cite{lorden1970excess} an elegant result that bounds the average overshoot uniformly over all $\nu$ by a constant. Unfortunately, we were not able to produce a similar conclusion for the $w$-dependent case. Instead we developed an upper bound that increases as $\sqrt{\nu}$. The good news is that even with this cruder bound, the asymptotic characteristics of our scheme will turn out to be of the same order as the ones we would have enjoyed with a constant upper bound applied on the average overshoot.

We borrow ideas from \cite{lorden1970excess} and modify them to accommodate the $w$-dependency. For any threshold $x>0$ define $\T'_x$ to be the corresponding stopping time
$$
\T'_x=\inf\{t>w:\U_t\geq x\},
$$
and denote the overshoot function as $\R_x=\U_{\T'_x}-x$. Define a sequence of stopping times $\{\tau_j\}$ with $\tau_0=w$ and 
$$
\tau_j=\inf\left\{t>\tau_{j-1}:\sum_{s=\tau_{j-1}+1}^t\ell_s>0\right\},~j\geq1,
$$
and the corresponding ladder variables $z_j=\sum_{s=\tau_{j-1}+1}^{\tau_j}\ell_s>0$. Due to the positivity of $\hat{\I}_0=\Exp_0^\theta[\ell_t]$ we have that under the $\Pro_0^\theta$ measure the stopping times are all a.s.~finite. We can now see that $\U_{\tau_j}=\sum_{i=1}^jz_i$, in fact $\U_t$ increases only at the stopping times $\{\tau_j\}$. For any given threshold $\nu$, in order to stop at $\T_\nu'$ the statistic $\U_t$ needs an increase at $\T_\nu'$, which means that there exists a random index $j_\nu$ such that $\tau_{j_\nu}=\T_\nu'$. Due to this fact we can write $\R_\nu=\sum_{j=1}^{j_\nu}z_j-\nu$. Following the same steps as in \cite{lorden1970excess}, we observe that $\R_x$ is a piecewise linear function and all pieces having slope $-1$, thus we have
\begin{equation}
\begin{aligned}
\int_0^\nu\R_x\,dx & =\int_0^{\U_{\T'_\nu}} \R_x \, dx - \int_\nu^{\U_{\T'_\nu}} \R_x \, dx =\frac{1}{2}\left\{\sum_{j=1}^{j_\nu}z_j^2-\R_\nu^2\right\}.    
\end{aligned}
\label{eq:AAA2}
\end{equation}
By definition $0<z_j$ and $z_j=\sum_{t=\tau_{j-1}+1}^{\tau_j-1}\ell_t+\ell_{\tau_j}$ with $\sum_{t=\tau_{j-1}+1}^{\tau_j-1}\ell_t\leq0$, then we have $0<\ell_{\tau_j}$ and $0<z_j\leq\ell_{\tau_j}$, which implies $z_j^2\leq\ell_{\tau_j}^2\leq\sum_{t=\tau_{j-1}+1}^{\tau_j}\ell_t^2$. Substituting in \eqref{eq:AAA2} yields
\[
2\int_0^\nu\R_x\,dx\leq\sum_{j=1}^{j_\nu}\sum_{t=\tau_{j-1}+1}^{\tau_j}\ell_t^2-\R_\nu^2 
=\sum_{t=w+1}^{\T'}\ell_t^2-\R_\nu^2 
\leq\sum_{t=w+1}^{\T'+w}\ell_t^2-\R_\nu^2.    
\]
If we take the expectation of the previous expression and use Jensen's inequality on the last term we obtain
\begin{align*}
&0\leq2\int_0^\nu\Exp_0^\theta[\R_x]\,dx
\leq\Exp_0^\theta\left[\sum_{t=w+1}^{\T'+w}\ell_t^2\right]-(\Exp_0^\theta[\R_\nu])^2
 \\
 & =\frac{\hat{\!\J}_0}{\hat{\I}_0}\Exp_0^\theta\left[\sum_{t=w+1}^{\T'+w}\ell_t\right]-(\Exp_0^\theta[\R_\nu])^2=\frac{\hat{\!\J}_0}{\hat{\I}_0}\Exp_0^\theta\left[\sum_{t=w+1}^{\T'}\ell_t+\sum_{t=\T'+1}^{\T'+w}\ell_t\right]-(\Exp_0^\theta[\R_\nu])^2 \\
& =\frac{\hat{\!\J}_0}{\hat{\I}_0}\Exp_0^\theta\left[\U_{\T'}+\sum_{t=\T'+1}^{\T'+w}\ell_t\right]-(\Exp_0^\theta[\R_\nu])^2=\frac{\hat{\!\J}_0}{\hat{\I}_0}\Exp_0^\theta\left[\R_\nu+\nu+\sum_{t=\T'+1}^{\T'+w}\ell_t\right]-(\Exp_0^\theta[\R_\nu])^2 \\
& =\frac{\hat{\!\J}_0}{\hat{\I}_0}\left\{\Exp_0^\theta[\R_\nu]+\nu+\Exp_0^\theta\left[\sum_{t=\T'+1}^{\T'+w}\ell_t\right]\right\}-(\Exp_0^\theta[\R_\nu])^2\\
& \leq\frac{\hat{\!\J}_0}{\hat{\I}_0}\left\{\Exp_0^\theta[\R_\nu]+\nu+w\I_0\right\}-(\Exp_0^\theta[\R_\nu])^2.
\end{align*}
The first equality is true because $\Exp_0^\theta[\sum_{t=w+1}^{\T'+w}\ell_t^2]=\hat{\!\J}_0\Exp_0^\theta[\T']$ and
$\Exp_0^\theta[\sum_{t=w+1}^{\T'+w}\ell_t]=\hat{\I}_0\Exp_0^\theta[\T']$. Also for the last inequality we used \eqref{eq:mbifla1}. From the nonnegativity of the integral we have
$(\Exp_0^\theta[\R_\nu])^2\leq\frac{\hat{\!\J}_0}{\hat{\I}_0}\left\{\Exp_0^\theta[\R_\nu]+\nu+w\I_0\right\}$, from which we conclude that $\Exp_0^\theta[\R_\nu]\leq\frac{\hat{\!\J}_0}{\hat{\I}_0}+\big(\frac{\hat{\!\J}_0}{\hat{\I}_0}\nu\big)^{\half}+\big(\frac{\hat{\!\J}_0}{\hat{\I}_0}\I_0w\big)^{\half}$. Given also from \eqref{eq:gamma} that $\nu=\log\gamma$, substituting in \eqref{eq:AAA1} produces the desired upper bound.
%
%We note that if we had obtained a result similar to the i.i.d.~case for the average overshoot this would have translated in the replacement of the term $\big(\frac{\hat{\!\J}_0}{\hat{\I}_0}\log\gamma\big)^{\half}$ in the upper bound in \eqref{eq:th1} by a constant.
%\qed
\end{proof}

\bibliographystyle{IEEEtran}
\bibliography{references}

\end{document}